\newtheorem{thm}{Theorem}
\newtheorem{prop}[thm]{Proposition}
\newtheorem{cor}[thm]{Corollary}
\newtheorem{clm}[thm]{Claim}
\newtheorem{ques}[thm]{Question}
\numberwithin{thm}{section}
\theoremstyle{definition}
\newtheorem{mydef}{Definition}
\newcommand{\cov}[1][k]{\operatorname{cov}_{#1}}
\DeclarePairedDelimiter{\parens}{(}{)}
\DeclarePairedDelimiter{\set}{\{}{\}}
\DeclarePairedDelimiter{\card}{|}{|}
\DeclarePairedDelimiter{\floor}{\lfloor}{\rfloor}
\DeclarePairedDelimiter{\ceil}{\lceil}{\rceil}
\DeclarePairedDelimiter{\brackets}{[}{]}
\DeclarePairedDelimiter{\size}{|}{|}
\def\cD{{\mathcal D}}
\def\cP{{\mathcal P}}
\def\cI{{\mathcal I}}
\def\cL{{\mathcal L}}
\def\cH{{\mathcal H}}
\title{Covering grids with multiplicity}
\author{Anurag Bishnoi\;\thanks{Delft Institute of Applied Mathematics, Technische Universiteit Delft, 2628 CD Delft, Netherlands. E-mail: \textsf{A.Bishnoi@tudelft.nl}.} \and Simona Boyadzhiyska\thanks{School of Mathematics, University of Birmingham, Edgbaston, Birmingham, B15 2TT, UK. This research was done when the author was affiliated with the Institut f\"ur Mathematik, Freie Universit\"at Berlin, 14195 Berlin, Germany. E-mail: \textsf{s.s.boyadzhiyska@bham.ac.uk}. Research supported by the Deutsche Forschungsgemeinschaft (DFG, German Research 
Foundation) under Germany's Excellence Strategy – The Berlin Mathematics 
Research Center MATH+ (EXC-2046/1, project ID: 390685689, BMS Stipend) and Graduiertenkolleg ``Facets of Complexity'' (GRK 2434).} \and Shagnik Das\;\thanks{Department of Mathematics, National Taiwan University, Taiwan. E-mail: \textsf{shagnik@ntu.edu.tw}. Research supported by Taiwan NSTC grant 111-2115-M-002-009-MY2.} \and Yvonne den Bakker\;\thanks{Delft Institute of Applied Mathematics, Technische Universiteit Delft, 2628 CD Delft, Netherlands. E-mail: \textsf{ym.den.bakker@gmail.com }. }}
\begin{document}
\maketitle

\begin{abstract}
Given a finite grid in $\mathbb{R}^2$, how many lines are needed to cover all but one point at least $k$ times? Problems of this nature have been studied for decades, with a general lower bound having been established by Ball and Serra. We solve this problem for various types of grids, in particular showing the tightness of the Ball--Serra bound when one side is much larger than the other. In other cases, we prove new lower bounds that improve upon Ball--Serra and provide an asymptotic answer for almost all grids. For the standard grid $\{0,\hdots,n-1\} \times \{0,\hdots,n-1\}$, we prove nontrivial upper and lower bounds on the number of lines needed. To prove our results, we combine linear programming duality with some combinatorial arguments.
\end{abstract}

\section{Introduction}

A celebrated result of Alon and F\"uredi~\cite{AF93} in combinatorial geometry states that any multiset of hyperplanes that covers all but one point of a $d$-dimensional finite grid $S_1 \times \cdots \times S_d \subseteq \mathbb{F}^d$ over an arbitrary field $\mathbb{F}$ must have size at least $\sum_{i = 1}^d (|S_i| - 1)$. This lower bound is easily seen to be tight by taking all hyperplanes of the form $x_i - a = 0$ for $1 \leq i \leq d$ and $a \in S_i \setminus \{b_i\}$, where $(b_1, \dots, b_n)$ is the point that is uncovered. This is a significant theorem for a few different reasons; not only did the proof of Alon and F\"uredi play an important role in the development of the polynomial method~\cite{Alon99, Guth_book, BCPSch18}, but this result and its generalisations have also seen several applications in a wide variety of mathematical disciplines~\cite{K94, BBSz10, BCPSch18, ABR22, BGGSZ22}.

One such generalisation that has been studied by several researchers is the multiplicity version of the problem, where the points of the grid should be covered multiple times. We introduce some notation to define this problem formally.

\begin{mydef}
Given finite subsets $S_1, S_2, \hdots, S_d$ of some field $\mathbb{F}$, we write $\Gamma = \Gamma(S_1, S_2, \hdots, S_d)$ for the grid $S_1 \times S_2 \times \hdots \times S_d \subseteq \mathbb{F}^d$. Note that by translation we may, and will, assume $\vec{0} \in \Gamma$. We call a point in $\Gamma$ a \emph{boundary point} if any of its coordinates is equal to $0$, and an \emph{interior point} otherwise.

For a given integer $k \ge 1$, we call a multiset $\mathcal{H}$ of hyperplanes in $\mathbb{F}^d$ a \emph{$k$-cover of $\Gamma$} if every nonzero point of $\Gamma$ is contained in at least $k$ of the hyperplanes, while $\vec{0}$ is not covered at all. We denote by $\cov(\Gamma; \mathbb{F})$ the minimum cardinality of a $k$-cover of $\Gamma$ in $\mathbb{F}^d$. In the case $\mathbb{F} = \mathbb{R}$, we shall omit the field from the notation and simply write $\cov(\Gamma)$.
\end{mydef}

In this notation, the Alon--F\"uredi Theorem establishes that $\cov[1](\Gamma,\mathbb{F}) = \sum_{i=1}^d \parens*{\size*{S_i} - 1 }$ for any grid $\Gamma$ over any field $\mathbb{F}$. The multiplicity extension asks for the value of $\cov(\Gamma; \mathbb{F})$ for multiplicities $k \ge 2$, and we can start with a few trivial observations. First, if we remove any hyperplane from a $k$-cover, what we are left with is still a $(k-1)$-cover, and so $\cov(\Gamma;\mathbb{F}) \ge \cov[k-1](\Gamma;\mathbb{F}) + 1$; that is, this extremal function is strictly increasing in $k$. In the other direction, since the union of a $k$-cover and an $\ell$-cover yields a $(k + \ell)$-cover, we have $\cov[k+\ell](\Gamma; \mathbb{F}) \le \cov(\Gamma; \mathbb{F}) + \cov[\ell](\Gamma; \mathbb{F})$, and so the function is subadditive in $k$. Applying these recursive inequalities repeatedly until we reach the $k=1$ case of Alon--F\"uredi, we have
\begin{equation} \label{eq:trivial_bounds}
\sum_{i=1}^d \parens*{\size*{S_i} - 1} + k-1 = \cov[1](\Gamma; \mathbb{F}) + k-1 \le \cov(\Gamma; \mathbb{F}) \le k \cov[1](\Gamma; \mathbb{F}) = k \sum_{i=1}^d \parens*{\size*{S_i} - 1}.
\end{equation}

The goal, then, is to narrow the considerable gap between these bounds, and there has been much previous research on some specific cases. Predating the work of Alon and F\"uredi~\cite{AF93}, the study of affine blocking sets in finite geometry corresponds to setting $\mathbb{F} = \mathbb{F}_q$ for some prime power $q$ and taking $\Gamma = \mathbb{F}_q^d$. For this grid, the classic paper of Jamison~\cite{J77} uses the polynomial method to prove $\cov[1](\Gamma; \mathbb{F}_q) = d(q-1)$. Bruen~\cite{B92} later used the polynomial method with multiplicities to provide lower bounds for the multiplicity version, showing $\cov(\Gamma, \mathbb{F}_q) \ge (d+k-1)(q-1)$.
This is an improvement upon~\eqref{eq:trivial_bounds}, but is generally not tight~\cite{Z02, LR14}. In~\cite{BBDM22}, the first three authors together with Tam\'as M\'esz\'aros obtained new bounds in the case $q=2$ by exploiting an equivalence between $k$-covers and linear binary codes of minimum distance $k$.

Recent work of Clifton and Huang~\cite{CH20} considered this problem over $\mathbb{R}$, where the grid is the hypercube $\Gamma = \{0,1\}^d$. For fixed dimension $d$ and growing multiplicity $k$, they used linear programming to determine $\cov(\Gamma)$ asymptotically. On the other hand, when the dimension $d$ is large with respect to the multiplicity $k$, they applied the polynomial method to provide general lower bounds that are tight for $k = 2$ and $k=3$. However, they conjectured that their lower bound of $d + k + 1$ is not tight for $k \ge 4$, and that the true value of $\cov(\Gamma)$ is $d + \binom{k}{2}$ for all fixed $k$ and large enough $d$ (see \cite[Conjecture 4.1]{CH20}). A subsequent paper of Sauermann and Wigderson~\cite{SW20} determined the best bound one can obtain with the polynomial method, where one seeks the minimum possible degree of a polynomial that does not vanish at the origin but has zeroes of multiplicity $k$ at all other points in the grid. While their result improves the Clifton--Huang lower bound to $\cov(\Gamma) \geq d + 2k - 3$, it still falls short of the conjectured value of $\cov(\Gamma)$ in this case, suggesting a strong separation between the algebraic and geometric problems.

Apart from these special cases, Ball and Serra~\cite[Theorem 5.3]{BS09} applied the polynomial method to obtain a lower bound valid for any grid $\Gamma = \Gamma(S_1, S_2, \hdots, S_d)$ over any field $\mathbb{F}$:
\begin{equation}\label{eq:ball-serra-bound}
    \cov(\Gamma; \mathbb{F}) \geq \sum_{i=1}^d \parens{\card{S_i}-1} + (k-1)\max\limits_{1 \le i \le d}\parens{\card{S_i}-1}.
\end{equation}
This extends the bound of Bruen, and is a sizeable improvement on the lower bound of~\eqref{eq:trivial_bounds}, but remains far removed from the upper bound. Indeed, in the symmetric case when $\size{S_i} = n$ for all $i \in [d]$, we have $(d+k-1)(n-1) \le \cov(\Gamma; \mathbb{F}) \le kd(n-1)$. It is thus of great interest to determine whether the Ball--Serra bound can be tight, and to obtain better bounds when it is not.

\medskip 

In this paper we initiate the systematic study of the covering problem for two-dimensional finite grids over $\mathbb{R}$. Our first set of results concerns how the dimensions of the grid affect the tightness of the Ball--Serra bound. We start by showing that when the grid is much wider than it is tall, the Ball--Serra bound is sharp.

\begin{thm}
\label{thm:asymmetric2d}
Let $S_1, S_2 \subseteq \mathbb{R}$ satisfy $\size{S_1} = n$, $\size{S_2} = m$, and $0 \in S_1 \cap S_2$, and set $\Gamma = \Gamma(S_1, S_2) \subseteq \mathbb{R}^2$. If, for a positive integer $k$, we have $n \ge (k-1)(m-1) + 1$, then
\[ \cov(\Gamma) = k(n-1) + (m-1). \]
\end{thm}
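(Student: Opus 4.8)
The plan is to prove matching lower and upper bounds, with the lower bound being immediate and all the difficulty concentrated in an explicit construction.

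For the lower bound I would invoke the Ball--Serra inequality~\eqref{eq:ball-serra-bound} directly. The hypothesis $n \ge (k-1)(m-1)+1$ gives $n-1 \ge (k-1)(m-1) \ge m-1$ whenever $k \ge 2$ (the case $k=1$ being the Alon--F\"uredi theorem itself), so $\max_{i}(\size{S_i}-1) = n-1$ and \eqref{eq:ball-serra-bound} becomes $\cov(\Gamma) \ge (n-1)+(m-1)+(k-1)(n-1) = k(n-1)+(m-1)$. It therefore suffices to construct a $k$-cover of exactly this size.

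For the construction, write $S_1 = \set{0,a_1,\dots,a_{n-1}}$ and $S_2 = \set{0,b_1,\dots,b_{m-1}}$, and split the nonzero points of $\Gamma$ into the \emph{interior} points $(a_i,b_j)$, the \emph{$x$-axis} points $(a_i,0)$, and the \emph{$y$-axis} points $(0,b_j)$. I would take $k-1$ copies of each vertical line $x=a_i$ (this is $(k-1)(n-1)$ lines), together with the $m-1$ horizontal lines $y=b_j$; after these, interior points are covered $k$ times, while each $x$-axis point is covered $k-1$ times and each $y$-axis point once. The remaining budget is exactly $n-1$ lines, which I would use to finish off the two axes: for each $y$-axis point $(0,b_j)$ I take $k-1$ slanted lines, each joining $(0,b_j)$ to a \emph{distinct} $x$-axis point, spending $(k-1)(m-1)$ lines and raising every $y$-axis point to multiplicity $k$ while covering $(k-1)(m-1)$ of the $x$-axis points one extra time; the remaining $(n-1)-(k-1)(m-1)$ $x$-axis points I cover with one further vertical line each. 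A direct count shows the four groups total $k(n-1)+(m-1)$, and a short case check confirms that every nonzero point reaches multiplicity $k$, while the origin, missed by every line, stays uncovered.

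The main obstacle, and the reason the hypothesis appears, is covering the $y$-axis points to multiplicity $k$ economically. The forbidden line $x=0$ rules out the natural vertical cover of the $y$-axis, and since any non-vertical line meets the $y$-axis in a single point, the $(k-1)(m-1)$ extra incidences needed there must come from slanted lines, each of which simultaneously consumes one $x$-axis point. The accounting only closes if these slanted lines can be routed through $(k-1)(m-1)$ \emph{distinct} $x$-axis points, which is precisely what $n-1 \ge (k-1)(m-1)$ guarantees. The routine facts to verify are that each slanted line, joining two nonzero axis points, avoids the origin, and that a non-axis-parallel line meets each axis exactly once, so that the per-axis bookkeeping is exact; over-covering of interior points by the slanted and extra vertical lines is harmless.
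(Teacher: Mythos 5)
Your proposal is correct and follows essentially the same route as the paper: the Ball--Serra bound~\eqref{eq:ball-serra-bound} for the lower bound, and a construction from $k-1$ copies of each vertical line, the $m-1$ horizontal lines, and $n-1$ further lines, with the hypothesis $n-1 \ge (k-1)(m-1)$ used exactly as you identify, to route slanted lines from $y$-axis points through distinct $x$-axis points. The only (cosmetic) difference is in spending the leftover budget: the paper partitions \emph{all} of $S_1\setminus\set{0}$ into parts $P_i$ of size at least $k-1$ and uses $n-1$ slanted lines (over-covering the $y$-axis), whereas you use exactly $(k-1)(m-1)$ slanted lines and finish the remaining $x$-axis points with one extra vertical copy each; both are valid and give the same count.
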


Our next result shows that the lower bound on $n$ from~\cref{thm:asymmetric2d} cannot be improved in general. In fact, when $n \le (k-1)(m-1)$, the Ball--Serra bound can be improved for almost all $n \times m$ grids. We make the ``almost all" precise with the following definition.

\begin{mydef}
Let $S_1, S_2 \subseteq \mathbb{R}$ with $0 \in S_1 \cap S_2$, let $\Gamma = \Gamma(S_1, S_2)$, and let $\Delta \ge 0$ be an integer. We call $\Gamma$ \emph{$\Delta$-generic} if any line containing two boundary points contains at most $\Delta$ interior points. In the case $\Delta = 0$, when such lines avoid all interior points, we simply call $\Gamma$ \emph{generic}.
\end{mydef}

To see that grids are typically generic, suppose the nonzero points of $S_1$ and $S_2$ are sampled uniformly and independently from $[-1,1]$. If a line contains two boundary points and an interior point, then the boundary points must come from different axes, and so we may assume the points in question are $(0,b_1)$, $(a_1,0)$ and $(a_2, b_2)$ for some nonzero $a_1, a_2 \in S_1$ and $b_1, b_2 \in S_2$. For these points to be collinear, we require $a_1(b_1 - b_2) = a_2 b_1$, and the probability of such an equation holding for any choice of $a_i, b_j$ is $0$. We are now ready to state our next result, which concerns $\cov(\Gamma)$ in the special case where $\Gamma$ is a generic grid. 

\begin{thm}
\label{thm:tightboundsasymmetric}
Let $\Gamma = \Gamma(S_1,S_2) \subseteq \mathbb{R}^2$ be a generic grid with $|S_1| = n$, $|S_2| = m$ and $0 \in S_1 \cap S_2$. Then 
\[ \cov(\Gamma) \ge k(n-1) + \frac{k}{n + m - 2} (m-1)^2.\]
Furthermore, if $\Gamma$ is an arbitrary $n \times m$ grid, and if $k$ is divisible by $\frac{n+m-2}{\gcd(n-1,m-1)}$, then we have
\begin{equation}\label{eq:tightboundasym}
     \cov(\Gamma) \le k(n-1) + \frac{k}{n+m-2}(m-1)^2.
\end{equation}
In particular, if $\Gamma$ is generic, we have equality above.
\end{thm}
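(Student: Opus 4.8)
The plan is to prove the lower bound by a weighted double-counting (LP-duality) argument and the matching upper bound by an explicit construction. Throughout, write $p = n-1$ and $q = m-1$, so that both target quantities equal $\frac{k(p^{2}+pq+q^{2})}{p+q}$, since $k(n-1)+\frac{k}{n+m-2}(m-1)^{2}=kp+\frac{kq^{2}}{p+q}=\frac{k(p^{2}+pq+q^{2})}{p+q}$.

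For the lower bound I would assign nonnegative weights to the nonzero points of a generic $\Gamma$: every boundary point on the $x_1$-axis gets weight $\tfrac{p}{p+q}$, every boundary point on the $x_2$-axis gets weight $\tfrac{q}{p+q}$, and every interior point gets weight $\tfrac{1}{p+q}$. (These values are forced by complementary slackness against the construction below, whose lines are the verticals, horizontals, and diagonals through two boundary points.) Since $\Gamma$ has $p$ and $q$ boundary points on the two axes and $pq$ interior points, $\sum_{P} w(P) = \frac{p^{2}+q^{2}+pq}{p+q}$. For any $k$-cover $\mathcal{H}$, weighted double counting then gives
\[ \card{\mathcal{H}} \ge \sum_{\ell \in \mathcal{H}} \sum_{P \in \ell} w(P) \ge k \sum_{P} w(P) = \frac{k\left(p^{2}+pq+q^{2}\right)}{p+q}, \]
where the second inequality uses that each nonzero point is covered at least $k$ times and $w \ge 0$. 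The content is the first inequality, i.e.\ verifying $\sum_{P \in \ell} w(P) \le 1$ for every admissible line $\ell$. Vertical and horizontal lines meet one boundary point and $q$ (resp.\ $p$) interior points, giving weighted sum $\tfrac{p+q}{p+q}=1$; a diagonal meeting two boundary points contributes $\tfrac{p}{p+q}+\tfrac{q}{p+q}=1$. The remaining diagonals are where genericity is essential: a line carrying at least one interior point can meet at most one boundary point (a second would witness a line through two boundary points containing an interior point), and as any line meets at most $\min(n,m)$ grid points its weighted sum is at most $\tfrac{p+t}{p+q}$ (one $x_1$-axis point, $t \le q$ interior), $\tfrac{q+t}{p+q}$ (one $x_2$-axis point, $t \le p$ interior), or $\tfrac{t}{p+q}$ (no boundary point, $t \le \min(n,m) \le p+q$), each at most $1$.

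For the upper bound on an arbitrary grid, I would take $\tfrac{pk}{p+q}$ copies of each of the $p$ vertical lines and $\tfrac{qk}{p+q}$ copies of each of the $q$ horizontal lines; the divisibility hypothesis $\tfrac{n+m-2}{\gcd(n-1,m-1)}\mid k$ is precisely what makes these multiplicities integers. Every interior point then lies on one vertical and one horizontal line and is covered exactly $\tfrac{pk}{p+q}+\tfrac{qk}{p+q}=k$ times, while a boundary point on the $x_1$-axis (resp.\ $x_2$-axis) is covered $\tfrac{pk}{p+q}$ (resp.\ $\tfrac{qk}{p+q}$) times, leaving a deficit of $\tfrac{qk}{p+q}$ (resp.\ $\tfrac{pk}{p+q}$). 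These deficits I would fill using diagonal lines through a pair of boundary points $(a,0)$ and $(0,b)$ with $a,b \neq 0$, each of which avoids the origin. The total deficit on each axis is $\tfrac{pqk}{p+q}$, so it suffices to realize a bipartite multigraph in which the $p$ points on the $x_1$-axis have degree $\tfrac{qk}{p+q}$ and the $q$ points on the $x_2$-axis have degree $\tfrac{pk}{p+q}$; this exists because the two degree sums coincide, and reading each edge as the corresponding diagonal yields $\tfrac{pqk}{p+q}$ further lines. The total is $\tfrac{pk}{p+q}\cdot p + \tfrac{qk}{p+q}\cdot q + \tfrac{pqk}{p+q} = \frac{k(p^{2}+pq+q^{2})}{p+q}$, matching the lower bound, and a generic grid is a legal instance of the ``arbitrary grid'' case, so the two bounds coincide to give the claimed equality.

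The step I expect to be the main obstacle is the feasibility check in the lower bound: one must control \emph{every} line through the grid, and the only delicate case is a diagonal carrying interior points. Here the genericity assumption does exactly the required work, through the observation that such a line meets at most one boundary point, which caps the number of interior points it can contain and keeps each weighted line-sum at most $1$. On the upper-bound side the only real bookkeeping is converting the divisibility hypothesis into integrality of the three multiplicities and invoking the elementary existence of a bipartite multigraph with prescribed, equal-sum degree sequences; neither is hard, but both must be stated carefully for the counts to close.
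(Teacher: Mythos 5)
Your proposal is correct and takes essentially the same route as the paper: your weighting $\frac{p}{p+q}$, $\frac{q}{p+q}$, $\frac{1}{p+q}$ is exactly the paper's dual-feasible weighting $\frac{n-1}{n+m-2}$, $\frac{m-1}{n+m-2}$, $\frac{1}{n+m-2}$, verified by the same case analysis (two boundary points forces no interior points by genericity; otherwise at most one boundary point and at most $q$ resp.\ $p$ interior points). The upper-bound construction is likewise identical, with your vertical/horizontal multiplicities $\frac{pk}{p+q}$, $\frac{qk}{p+q}$ matching the paper's $d_2 = \frac{(n-1)k}{n+m-2}$, $d_1 = \frac{(m-1)k}{n+m-2}$ and the deficits on the axes filled by the same biregular bipartite multigraph of diagonals.
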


Note that $\frac{k}{n+m-2}(m-1)^2$ is strictly larger than $m-1$ precisely when $n \le (k-1)(m-1)$, and hence these theorems show the Ball--Serra bound is tight for generic grids if and only if $n \ge (k-1)(m-1)+1$.

\medskip

Theorem~\ref{thm:tightboundsasymmetric} shows that the Ball--Serra bound gets worse as the dimensions of the grid grow closer. For the majority of our paper, therefore, we focus on square grids $\Gamma$, where $n = m$. In this case, the Ball--Serra bound gives $\cov(\Gamma) \ge (k+1)(n-1)$. Our next theorem gives a stark improvement for general square grids.

\begin{restatable}{thm}{squaregridsgeneral}
\label{thm:squaregridsgeneral}
    Let $\Gamma = \Gamma(S_1, S_2) \subset \mathbb{R}^2$ be a grid with $|S_1| = |S_2| = n$ and $0 \in S_1 \cap S_2$. Then, for any integer $k \ge 2$, we have:
    \begin{enumerate}[label=(\alph*)]
        \item $\cov(\Gamma) \leq \ceil*{\frac{3}{2}k}(n-1)$.\label{thm:squaregridsub}
        \item $\cov(\Gamma) \ge (10 - 4 \sqrt{5} + o(1))k(n-1)$, where the asymptotics are for $n\to\infty$.\label{thm:squaregridslb}
        \item if $\Gamma$ is $\Delta$-generic for some $\Delta \ge 0$, then $\cov(\Gamma) \ge \brackets*{2 - \frac{n-1}{2(n-1) - \Delta}} k(n-1)$. \label{thm:squaregridsgeneric}
    \end{enumerate}
\end{restatable}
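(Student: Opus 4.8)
My plan is to handle all three bounds through the linear-programming relaxation of the covering problem together with its dual. Write the primal as: minimise $\sum_\ell x_\ell$ over nonnegative weights $x_\ell$ on the lines of $\mathbb{R}^2$ that avoid the origin, subject to $\sum_{\ell \ni p} x_\ell \ge k$ for every nonzero $p \in \Gamma$. Any $k$-cover is a feasible integral point, so $\cov(\Gamma)$ is at least the LP optimum, and by duality this optimum is $k \cdot W^\ast$, where $W^\ast$ is the maximum total weight one can place on the nonzero grid points subject to every origin-avoiding line receiving weight at most $1$. Thus for the two lower bounds it suffices to exhibit a feasible dual weighting of the required total weight, while for the upper bound I will construct an explicit cover. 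Two elementary geometric facts will be used throughout: every origin-avoiding line meets at most two boundary points (it crosses each axis at most once, and the axes themselves pass through $\vec 0$), and, being the graph of an affine function in one coordinate, it meets at most $n-1$ interior points.

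For part \ref{thm:squaregridsgeneric} I use the uniform dual weighting that assigns weight $w_I$ to each of the $(n-1)^2$ interior points and $w_B$ to each of the $2(n-1)$ boundary points. The line constraints reduce to two essential inequalities: an axis-parallel line carries $n-1$ interior points and one boundary point, giving $(n-1)w_I + w_B \le 1$, while a line through two boundary points carries at most $\Delta$ interior points by $\Delta$-genericity, giving $2w_B + \Delta w_I \le 1$; every remaining line is dominated by one of these, since it has at most one boundary point and at most $n-1$ interior points. Maximising $(n-1)^2 w_I + 2(n-1)w_B$ subject to these two constraints is a two-variable LP whose optimum is attained when both are tight, at $w_I = \tfrac{1}{2(n-1)-\Delta}$ and $w_B = \tfrac{(n-1)-\Delta}{2(n-1)-\Delta}$, with total weight exactly $\bigl[\,2 - \tfrac{n-1}{2(n-1)-\Delta}\,\bigr](n-1)$; multiplying by $k$ gives the bound. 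For the upper bound \ref{thm:squaregridsub} I will build a $2$-cover of size $3(n-1)$ and take $\lfloor k/2\rfloor$ copies of it, appending one axis-parallel $1$-cover of size $2(n-1)$ when $k$ is odd, for a total of $\lceil \tfrac32 k\rceil(n-1)$ lines. The $2$-cover consists of the $n-1$ vertical lines $x=a$, the $n-1$ horizontal lines $y=b$, and $n-1$ further lines through the pairs of a fixed bijection between the $x$-axis and $y$-axis boundary points. The vertical and horizontal lines already cover each interior point twice and each boundary point once, while the $n-1$ extra lines give every boundary point a second cover; since none of these lines passes through $\vec 0$, this is a valid $2$-cover.

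Part \ref{thm:squaregridslb} is the genuinely hard case: for an arbitrary grid a line through two boundary points may contain up to $n-2$ interior points, so the argument above degrades to the trivial bound. The natural plan is again to produce a feasible dual weighting, but now adapted to the grid. Fix a threshold $t \in (0,1)$, call a two-boundary line \emph{rich} if it carries more than $t(n-1)$ interior points, and assign the interior points lying on rich lines a reduced weight while keeping the uniform values elsewhere. Rich lines then impose only the mild constraint coming from their reduced interior weight, whereas the loss in total weight is governed entirely by how many interior points rich lines can cover. The crux, and the main obstacle, is a combinatorial incidence estimate bounding this fraction as a function of $t$: the tools available are that the rich lines through a fixed axis point are pairwise interior-disjoint (so a pencil of them through one point cannot be too large) and that each rich line is pinned down by its second axis intercept, which limits how many can exist in total. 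Feeding such a bound into the two-variable optimisation from part \ref{thm:squaregridsgeneric} and then optimising over $t$ yields a saddle point whose worst-case value is the constant $10-4\sqrt5$; I expect the delicate step to be proving the incidence estimate with the correct dependence on $t$, since it is precisely this dependence that fixes the final constant. Finally, since part \ref{thm:squaregridsgeneric} with $\Delta=0$ gives the lower bound $\tfrac32 k(n-1)$, matching part \ref{thm:squaregridsub} for even $k$, the two bounds pin down $\cov(\Gamma)$ for generic grids up to the ceiling.
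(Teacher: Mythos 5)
Parts \ref{thm:squaregridsub} and \ref{thm:squaregridsgeneric} of your proposal are correct and essentially identical to the paper's proof: your $\floor{k/2}$ copies of a $3(n-1)$-line $2$-cover plus an axis-parallel patch for odd $k$ is the same construction as the paper's ($\ceil{k/2}$ copies of each axis-parallel line and $\floor{k/2}$ copies of lines through a matching of the boundary points), and your dual weights $w_I = \frac{1}{2(n-1)-\Delta}$, $w_B = \frac{(n-1)-\Delta}{2(n-1)-\Delta}$ in part \ref{thm:squaregridsgeneric} are exactly the paper's $\alpha$ and $\beta$. (A pedantic remark: $w_B \ge 0$ needs $\Delta \le n-1$, which is harmless since no origin-avoiding line contains more than $n-2$ interior points.)

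Part \ref{thm:squaregridslb}, however, has a genuine gap, and it sits precisely where you flag it: the incidence estimate your plan hinges on is never stated in a precise form, never proved, and in the form your loss analysis needs it is false. Take the standard grid $\Gamma_n$ with your threshold $t$: every diagonal $x+y=c$ with $t(n-1)+1 < c \le n-1$ is rich, and the union of these rich lines alone contains all interior points in the band $t(n-1) < x+y \le n-1$, which is a $\parens*{\tfrac12 - \tfrac{t^2}{2} + o(1)}$-fraction of all interior points --- a constant fraction no matter how small $t$ is. So ``the loss governed by how many interior points rich lines can cover'' is not small, and neither of the two tools you cite controls this union: pairwise interior-disjointness of the pencil through a fixed boundary point only bounds that pencil by $(n-1)/t$ rich lines, and counting rich lines by their intercepts says nothing about the union of their interior points, since each rich line may carry up to $n-2$ of them. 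Consequently the two-variable optimisation you describe does not visibly produce any constant larger than $1$, and the value $10-4\sqrt{5}$ is asserted rather than derived --- it is the output of a structurally different weighting.

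The missing idea, which is the heart of the paper's proof, is Claim~\ref{clm:linesize}: enumerating $S_1$ as $x_1 < \dots < x_n$ with $x_{i_0}=0$, any line of positive slope through $(0,y_j)$ contains at most $n - \card{j - i_0}$ points of $\Gamma$, and any line of negative slope at most $n - \card{(n-j) - i_0}$ (proved by an elementary injection argument). Hence every two-boundary line with more than $n-t$ points must meet the $y$-axis at one of at most $2(2t-1)$ points, namely those whose index lies within $t$ of $i_0$ or of $n-i_0$. The paper therefore keeps the interior weight uniform at $\alpha$ and the boundary weight at $\beta$, zeroing out only those few $y$-axis points; feasibility then reduces to $2\beta + (n-t-2)\alpha \le 1$ and $\beta + (n-1)\alpha \le 1$, with total weight at least $2(n-2t)\beta + (n-1)^2\alpha$, and optimising (at $t \approx \parens{\tfrac{\sqrt{5}}{2}-1}n$, $\alpha = \frac{1}{n+t}$, $\beta = \frac{t+1}{n+t}$) yields $(10-4\sqrt{5}+o(1))(n-1)$. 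In short: heavy lines cluster through $O(t)$ boundary points, so one deletes \emph{boundary} weight there, rather than discounting \emph{interior} weight on all rich lines as your plan does. Without this clustering statement (or a proved substitute), your part \ref{thm:squaregridslb} is a plausible-sounding programme, not a proof.
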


Note that part~\ref{thm:squaregridsub} improves the trivial upper bound from~\eqref{eq:trivial_bounds}, while, since $10 - 4 \sqrt{5} \approx 1.0557$, part~\ref{thm:squaregridslb} gives a constant factor improvement over the Ball--Serra bound in~\eqref{eq:ball-serra-bound}, showing that it is never tight for large square grids. Moreover, as previously discussed, almost all grids are generic, and substituting $\Delta = 0$ into part~\ref{thm:squaregridsgeneric} gives a lower bound that matches the upper bound of part~\ref{thm:squaregridsub} exactly when $k$ is even and asymptotically when $k$ is odd and large. In fact, part~\ref{thm:squaregridsgeneric} gives an asymptotically tight bound for all $o(n)$-generic grids.

\medskip

However, the most natural grid to consider is the \emph{standard $n\times n$ grid}, given by $S_1 = S_2 =\set{0,1,2,\dots, n-1}$, and we denote this by $\Gamma_n = \Gamma(S_1, S_2)$. By considering the diagonal $x+y=n-1$, we see that $\Gamma_n$ is not $\Delta$-generic for any $\Delta < n-2$, and so Theorem~\ref{thm:squaregridsgeneral}\ref{thm:squaregridsgeneric} is worse than the Ball--Serra bound when $n > k$. By tailoring our methods to this specific grid, we obtain the following improvements on the general bounds.

\begin{thm}\label{thm:standardgridbounds}
    Let $n,k\geq 2$ be integers, let $S=\set{0,1,2,\dots, n-1}$ and let $\Gamma_n = \Gamma(S,S)$ be the standard $n \times n$ grid. Then, as $n,k\to \infty$, we have
    \begin{align} \label{eq:standardgridbounds}
        (2 - e^{-1/2} +o(1))k(n-1) \leq \cov(\Gamma_n) \leq (\sqrt{2}+o(1))k(n-1).
    \end{align}
\end{thm}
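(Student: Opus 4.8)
The plan is to prove both bounds through the natural linear programming relaxation of the covering problem and its dual. Writing the primal as: minimise $\sum_\ell x_\ell$ over nonnegative weights $x_\ell$ on the lines not through the origin, subject to $\sum_{\ell \ni P} x_\ell \ge k$ for every nonzero $P \in \Gamma_n$, the value of this LP lower-bounds $\cov(\Gamma_n)$. Its dual asks for nonnegative point-weights $y_P$ with $\sum_{P \in \ell} y_P \le 1$ for every admissible line $\ell$, and then $\cov(\Gamma_n) \ge k \sum_P y_P$. So the lower bound in \eqref{eq:standardgridbounds} will come from exhibiting a good dual-feasible weighting, while the upper bound will come from an explicit integral cover (a rounded primal-feasible solution).

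For the lower bound, the first observation is that the difficulty is concentrated near the two axes: a point on the $x$-axis cannot be covered by any horizontal line (the line $y=0$ passes through the origin and is forbidden), and likewise for the $y$-axis. In dual terms this means the axis points are \emph{not} constrained by a full row (respectively column), so one can afford to place large weight there, the only obstructions being the column (respectively row), diagonal and anti-diagonal through the point. The diagonal $x+y=s$ contains both $(s,0)$ and $(0,s)$, which couples the two axes and caps their combined weight, forcing weight $\approx \tfrac12$ on each axis point. Pushing weight inward along a carefully tuned decaying profile on the near-axis interior points, and optimising that profile subject to the diagonal and row/column constraints, yields a feasible weighting with $\sum_P y_P = (2-e^{-1/2}+o(1))(n-1)$, the factor $e^{-1/2}$ emerging from the limiting (exponential) optimisation; multiplying by $k$ gives the claimed bound. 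The delicate point is feasibility against \emph{all} admissible lines, not just the four distinguished directions: for a line of some other slope I would bound the number of grid points it contains and check that the corresponding constraint is slack, so that only axis-parallel and $\pm1$-diagonal lines are binding.

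For the upper bound the key structural feature of the \emph{standard} grid, as opposed to a generic one, is that it has two full families of long diagonals. A cover using axis-parallel lines together with diagonals of a single slope cannot beat the ratio $\tfrac32$ of \cref{thm:squaregridsgeneral}\ref{thm:squaregridsub}; indeed, optimising over such families always returns $\tfrac32$, because the low diagonals needed to patch the axes over-cover the lower-left triangle. To break this barrier I would use \emph{both} diagonal families simultaneously, assigning position-dependent multiplicities to rows, columns, diagonals and anti-diagonals, chosen so that every point is covered at least $k$ times while the long central diagonals carry most of the interior load and short, boundary-incident diagonals patch the axes with little waste. Setting this up as a fractional cover over the four line-families and optimising gives the ratio $\sqrt2$; since $k,n\to\infty$, the resulting weights can be rounded to an integral cover at a cost of only $o(k(n-1))$ extra lines.

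The main obstacle, for both directions, is the interaction near the two axes, which is exactly what pushes the constant strictly above $1$: it is responsible for the $e^{-1/2}$ correction in the dual weighting and for the need to bring in the second diagonal family in the construction. A secondary but genuine difficulty on the lower-bound side is discharging the feasibility check over lines of arbitrary slope, where I expect a clean count of collinear grid points to suffice. Finally, the gap between $2-e^{-1/2}\approx 1.394$ and $\sqrt2\approx 1.414$ reflects that neither the dual weighting nor the construction is shown to be optimal, so the exact constant for $\cov(\Gamma_n)$ remains open.
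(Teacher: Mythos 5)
Your LP-duality framework matches the paper's, and both constants arise roughly as you anticipate (a harmonic-sum optimisation for $e^{-1/2}$, a one-parameter trade-off for $\sqrt{2}$), but both halves of your plan have genuine gaps. On the lower bound, your geometry is wrong: once every axis point carries weight $\tfrac12$, each anti-diagonal $x+y=s$ with $s\le n-1$ already has weight $1$ from its two endpoints $(s,0)$ and $(0,s)$, so \emph{no} near-axis interior point can receive positive weight --- the ``decaying profile on the near-axis interior points'' is infeasible from the start. The paper instead places the extra mass in the far corner, on the diagonals beyond the main anti-diagonal: weight $\tfrac{1}{n-i}$ on every point of $x+y=n-1+i$ for $1\le i\le t$, where $t$ is the largest integer with $\sum_{i=1}^{t}\tfrac{1}{n-i}\le\tfrac12$. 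This region is exactly the set of points unreachable by lines through two weighted boundary points (any such line has all its interior grid points satisfying $x+y\le n-1$), which is what makes feasibility work. Your hope that lines of other slopes are slack via a count of collinear points also fails: on any line where $x+y$ is non-constant the map $(x,y)\mapsto x+y$ is injective, so the line can pick up one point from each of the $t$ weighted diagonals plus one axis point of weight $\tfrac12$; this constraint (equally realised by the row $y=n-1$) is asymptotically tight, and it is precisely what forces $t=(1-e^{-1/2}+o(1))(n-1)$ rather than emerging from a slack estimate.

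On the upper bound, your structural claim that rows, columns, and a single diagonal family ``cannot beat the ratio $\tfrac32$'' is false, and it steers you toward an unnecessary and unverified four-family construction. The paper achieves $\sqrt{2}$ using only slopes $0$, $\infty$, and $-1$: take $\lceil\tfrac{i}{n+t-1}k\rceil$ copies of $x=i$ and of $y=i$ for each $i\in[n-1]$, and $k-\lceil\tfrac{i}{n+t-1}k\rceil$ copies of $x+y=i$ for $1\le i<n+t-1$; points with $s_1+s_2\ge n+t-1$ are covered $k$ times by their row and column alone, and optimising $t=\lceil\sqrt{2n(n-1)}-(n-1)\rceil=(\sqrt2-1+o(1))(n-1)$ yields $(\sqrt{2}+o(1))k(n-1)$. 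Your $\tfrac32$ barrier holds only for \emph{uniform} multiplicities within each family; the position-dependent multiplicities you reserve for the four-family setting are exactly what break it with three families. Indeed, \cref{thm:lowerboundthreetypes} shows $\sqrt{2}$ is optimal among covers restricted to these three slopes, so the second diagonal family buys nothing at this level, and you give no verification that your four-family optimisation actually returns $\sqrt{2}$.
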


Note that $2 - e^{-1/2} \approx 1.3935$, while $\sqrt{2} \approx 1.4142$, so there is still a gap between the best lower and upper bounds we obtain for standard grids. To obtain sharper bounds, it can help to restrict the class of $k$-covers we consider. As we will see in the proof of the upper bound in~\cref{thm:standardgridbounds}, our construction uses only three types of lines: horizontal, vertical, and lines of slope $-1$. A subsequent computer search verified that for small values of $n$ and $k$, we can always find an optimal $k$-cover using only these three kinds of lines. In our final result, we provide a matching lower bound for these restricted $k$-covers, suggesting the upper bound of~\eqref{eq:standardgridbounds} may be correct in the unrestricted case as well.

\begin{thm}\label{thm:lowerboundthreetypes}
    As $n \to \infty$, the smallest $k$-cover of the standard grid $\Gamma_n$ that only contains lines of slope $0, \infty$, or $-1$ has size at least $\parens*{\sqrt{2}+o(1)}k(n-1)$.
\end{thm}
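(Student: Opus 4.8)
The plan is to recast the restricted covering problem as a linear program and attack its lower bound through LP duality. A cover using only the three allowed slopes is described by multiplicities $a_c$ (horizontal line $y=c$) and $b_c$ (vertical line $x=c$) for $c\in\{1,\dots,n-1\}$, together with $d_c$ (anti-diagonal $x+y=c$) for $c\in\{1,\dots,2n-2\}$; the three lines through the origin are forbidden. The covering condition becomes $a_y+b_x+d_{x+y}\ge k$ for every nonzero $(x,y)\in\Gamma_n$ (with the convention $a_0=b_0=0$), and the objective is to minimise $\sum_c a_c+\sum_c b_c+\sum_c d_c$. I would work with the LP relaxation, since any lower bound for it also bounds the integer program. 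Dualising, I assign a weight $w_{x,y}\ge 0$ to each nonzero point; feasibility requires that the total weight on each usable row, column, and anti-diagonal be at most $1$, and weak duality then yields that every such cover has size at least $k\sum_{x,y}w_{x,y}$. The crucial structural feature is that, because the lines through $\vec 0$ are absent, points on the two axes are unconstrained by their own row or column and are limited only by their anti-diagonal.

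To locate the extremal weighting, I would first pin down the matching value by rescaling to $(u,v)=(x,y)/(n-1)\in[0,1]^2$ and solving the continuous relaxation. Writing $\alpha,\beta$ for the rescaled horizontal and vertical line densities and $\delta$ for the anti-diagonal density, the cover conditions read $\alpha(v)+\beta(u)+\delta(u+v)\ge 1$ on the interior and $\beta(u)+\delta(u)\ge 1$, $\alpha(v)+\delta(v)\ge 1$ on the axes. The linear family $\alpha(u)=\beta(u)=cu$, $\delta(s)=\max(0,1-cs)$ is feasible with cost $c+\tfrac{1}{2c}$, which by AM--GM is minimised at $c=1/\sqrt2$ with value $\sqrt2$; this is essentially the construction behind the upper bound of~\eqref{eq:standardgridbounds}. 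For this optimum the coverage equals $1$ precisely on the pentagon $P=\{u+v\le\sqrt2\}$ and is strictly larger beyond it, so complementary slackness tells me the extremal dual weighting must be supported on $\{x+y\le\sqrt2\,(n-1)\}$ and must make every row, every column, and every anti-diagonal with $s<\sqrt2$ tight (total weight exactly $1$). I would construct such a weighting as a balanced density on $P$ together with explicit weights on the two axes, verify feasibility line-by-line, and check that the total weight tends to $\sqrt2\,(n-1)$.

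The hard part will be producing this balanced weighting, since it must realise three prescribed families of marginals (row, column, and anti-diagonal sums) simultaneously. Natural closed forms are tantalising but insufficient: a weight proportional to $1/(2-(u+v))$ makes all anti-diagonal marginals correct but gives the wrong row marginals, while a constant weight fails the diagonals, so the true certificate is genuinely two-dimensional and may require a limiting or measure-theoretic construction. An alternative route, which I would pursue in parallel, is a purely primal argument: after symmetrising to $\alpha=\beta$ (which preserves both feasibility and cost), one integrates the axis inequality $\alpha(u)+\delta(u)\ge 1$ together with the interior inequalities $\alpha(u)+\alpha(v)+\delta(u+v)\ge 1$ against a suitable measure on the splits $u+v=s$, and reduces the cost to the scalar expression $c+\tfrac{1}{2c}\ge\sqrt2$. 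The delicate point in either approach is the same: choosing the weights on the anti-diagonal splits (equivalently, reconstructing the optimal dual) so that the bound survives for arbitrary, non-linear $\alpha$ and $\delta$, and not merely for the linear extremiser.

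Finally, I would pass from the continuous optimum back to the stated $o(1)$ bound. This last step is routine but needs care: the integrals are approximated by Riemann sums over the $\Theta(n)$ available lines, the multiplicities are rounded to integers, and the two axis lines contribute only $O(1)$ terms. All of these errors are lower order and are absorbed into the factor $(\sqrt2+o(1))$ as $n\to\infty$.
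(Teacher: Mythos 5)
Your setup coincides with the paper's: restrict the LP to the three admissible slopes, dualise, and seek a weighting of the nonzero points of $\Gamma_n$ in which every origin-avoiding row, column, and diagonal (slope $-1$) has weight at most $1$. Your complementary-slackness predictions --- support in $\{x+y \lesssim \sqrt{2}(n-1)\}$ and tightness of all rows, columns, and the low diagonals --- are also consistent with the extremal weighting the paper constructs. But the proposal stops exactly where the proof must begin: you never exhibit the dual certificate, and you say yourself that producing the ``genuinely two-dimensional'' weighting is the hard part, possibly requiring ``a limiting or measure-theoretic construction''. The alternative primal route you sketch has the same hole, since choosing the measure on the splits $u+v=s$ that makes the bound survive for arbitrary non-linear $\alpha,\delta$ is equivalent to reconstructing the dual optimum. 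So what is missing is the entire technical content of the theorem: a feasible weighting of total weight $(\sqrt{2}+o(1))(n-1)$.

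The idea that closes this gap in the paper is an ansatz collapsing the two-dimensional marginal problem to a one-dimensional recurrence: make the weighting symmetric in $x,y$, make interior weights constant along each diagonal, force every vertical (hence horizontal) line to have weight exactly $1$, and let the diagonal $x+y=i$ have weight $1$ for $i \le n+t-1$, at most $1$ for $i=n+t$, and $0$ beyond. Writing the boundary weights as $\tfrac12 - \alpha_{x+y}$, so that interior points on the diagonal $x+y=i\le n-1$ get $\tfrac{2\alpha_i}{i-1}$ each while points with $n \le x+y \le n+t-1$ get $\tfrac{1}{2n-1-(x+y)}$ and those with $x+y=n+t$ get a parameter $z$, a comparison of consecutive columns $x=i-1$ and $x=i$ yields the recurrence $\alpha_{i-1} = \bigl(1+\tfrac{2}{i-1}\bigr)\alpha_i - z\mathbf{1}_{i=t+1} - \tfrac{1}{n-i}\mathbf{1}_{i\le t}$ with $\alpha_1=0$, which can be solved in closed form. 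Feasibility of $z$ (namely $0 \le z \le \tfrac{1}{n-t-1}$) together with non-negativity of all weights ($0\le\alpha_i\le\tfrac12$, checked by induction) then forces $t=(\sqrt{2}-1+o(1))(n-1)$, and since the total weight equals $n+t-1+O(1)$, the bound $(\sqrt{2}+o(1))k(n-1)$ follows from the LP framework. Your observation that neither a constant interior density nor one proportional to $1/(2-(u+v))$ matches all three families of marginals is correct, but the resolution is not a cleverer closed form; it is the diagonal-constant parametrisation plus the recurrence, and without some such device your outline does not constitute a proof.
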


When proving these results, we shall establish the upper bounds by means of explicit constructions of $k$-covers. The lower bounds, meanwhile, will follow by applying duality to a linear programming relaxation of this problem, and we shall set up this framework in Section~\ref{sec:linearprogramming}. Having described our methodology, we will prove Theorems~\ref{thm:asymmetric2d} and~\ref{thm:tightboundsasymmetric} in Section~\ref{sec:ballserra}. We then shift our focus to square grids, proving Theorems~\ref{thm:squaregridsgeneral},~\ref{thm:standardgridbounds}, and~\ref{thm:lowerboundthreetypes} in Section~\ref{sec:squaregrids}. Finally, we provide some concluding remarks and open problems in Section~\ref{sec:conclusion}.

\section{The linear programming framework} \label{sec:linearprogramming}

In this section we introduce the linear programming method that will be used to prove our lower bounds. The use of linear programming in extremal combinatorics is well-established and has led to many results (see, for example,~\cite{GMS2022}), including the Clifton--Huang~\cite{CH20} lower bound on $\cov\parens{\Gamma}$ for $\Gamma = \{0,1\}^d$ in the case when $d$ is fixed and $k$ tends to infinity. The standard template is as follows: assume without loss of generality that we are working with a minimization problem; first, we interpret our extremal problem as an instance of an integer programming problem. Then, since integer programming is intractable, we consider the linear programming relaxation, where we allow fractional solutions. Since an integer solution is in particular a fractional solution, the value of the linear program gives a lower bound to the original extremal problem. Crucially, we can then apply duality, and the task of finding lower bounds translates to finding feasible solutions to the dual linear program. We refer the reader to~\cite{matousek2006understanding} for further background on linear programming.

\medskip

To start with this plan of action, we need to recast the covering problem as an integer linear program. Given a grid $\Gamma = \Gamma(S_1, S_2)$, for some finite sets $S_1, S_2 \subseteq \mathbb{R}$ containing $0$, we wish to determine $\cov\parens{\Gamma}$, the minimum number of lines in a $k$-cover of $\Gamma$. We shall assume $\card{S_1}, \card{S_2} \ge 2$, as the problem is otherwise trivial.

For every possible line $\ell$ not containing the origin, we introduce a variable $z(\ell)$ indicating its multiplicity in the $k$-cover. We thus wish to minimise $\sum_{\ell} z(\ell)$, the size of the cover. In order to ensure that the solution returned by the integer program is a $k$-cover, we need to require that each nonzero point of $\Gamma$ be covered at least $k$ times while the origin be omitted altogether. The latter constraint is easily seen to be satisfied. Hence, for each $(x,y) \in \Gamma \setminus \{(0,0)\}$, we require that the sum of $z(\ell)$ over all origin-avoiding lines $\ell$ containing $(x,y)$ be at least $k$.

The one catch is that there are infinitely many lines in $\mathbb{R}^2$. To obtain a finite program, we observe that we may restrict our attention to lines that contain at least two nonzero points in $\Gamma$. Indeed, in any minimal $k$-cover of $\Gamma$, every line must contain at least one nonzero point, and if a line contains only one point $(x,y)$, then we can replace it by a different origin-avoiding line passing through $(x,y)$ and at least one other point of $\Gamma$. Thus, we need only consider lines from the set $\mathcal{L} = \mathcal{L}(\Gamma)$ of origin-avoiding lines containing at least two points of $\Gamma$. Since there are fewer than $\size{\Gamma}^2$ pairs of nonzero points in $\Gamma$, each of which determines a unique line, it follows that $\mathcal{L}$ is finite.

In summary, $\cov(\Gamma)$ is the solution to the following integer linear program $\mathcal{I} = \mathcal{I}(\Gamma, k)$.
\begin{align*}
    &\text{minimize} \quad \sum\limits_{\ell\in \cL} z(\ell)\\
    &\text{subject to}\notag\\
    &\quad\quad\quad\quad\sum\limits_{\substack{\ell\in \cL:\\ (x,y)\in \ell}} z(\ell) \geq k \quad \text{for all }(x,y)\in \Gamma \setminus \{(0,0)\} \\
    &\quad\quad\quad\quad z(\ell) \in \mathbb{Z}_{\geq 0} \quad \text{for all }\ell\in \cL
\end{align*}

The final constraint, that the variables $z(\ell)$ be integral, renders solving the program computationally infeasible. Instead, to obtain a polynomial-time solvable problem, we can relax the variables to be real-valued. Now that we are no longer constrained to the integers, we can also divide through by $k$, removing the dependence of the program on this parameter. We therefore obtain the linear program $\cP = \cP(\Gamma)$ with the normalised variables $u(\ell)$ for $\ell \in \cL$.
\begin{align} \label{eq:primal}
    &\text{minimize} \quad\sum\limits_{\ell\in \mathcal{L}} u(\ell) \\
    &\text{subject to}\notag\\
    &\quad\quad\quad\quad \sum\limits_{\substack{\ell\in \mathcal{L}: \\ (x,y)\in \ell}} u(\ell) \geq 1 \quad \text{for all }(x,y)\in \Gamma \setminus \{(0,0)\} \notag \\
    &\quad\quad\quad\quad u(\ell) \geq 0 \quad \text{for all }\ell\in \mathcal{L} \notag
\end{align}

Let us denote by $\Phi(\Gamma)$ the solution to $\cP(\Gamma)$. The following result shows that $\Phi(\Gamma)$ describes the asymptotic behaviour of $\cov(\Gamma)$ when $k$ is large with respect to the dimensions of the grid.

\begin{prop} \label{prop:linprogbound}
For any grid $\Gamma$ and integer $k \ge 1$ we have
\[ k \Phi(\Gamma) \le \cov(\Gamma) \le k \Phi(\Gamma) + \size{\cL}. \]
\end{prop}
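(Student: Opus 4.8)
The plan is to exploit the fact that the integer program $\cI(\Gamma,k)$ and the linear program $\cP(\Gamma)$ differ only in the right-hand side of the covering constraints ($k$ versus $1$) and in the integrality requirement on the variables. Consequently, a feasible solution to one can be turned into a feasible solution to the other by a single scaling step, at the cost of rounding. I would therefore prove the two inequalities separately, each time exhibiting an explicit feasible point of the target program constructed from an optimal solution of the source program.

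For the lower bound $k\Phi(\Gamma)\le\cov(\Gamma)$, I would start from an optimal integer $k$-cover, that is, an optimal solution $(z(\ell))_{\ell\in\cL}$ of $\cI(\Gamma,k)$ with $\sum_{\ell}z(\ell)=\cov(\Gamma)$, and set $u(\ell)=z(\ell)/k$. Each $u(\ell)\ge 0$, and for every nonzero $(x,y)\in\Gamma$ we have $\sum_{\ell\ni(x,y)}u(\ell)=\frac{1}{k}\sum_{\ell\ni(x,y)}z(\ell)\ge\frac{1}{k}\cdot k=1$, so $(u(\ell))_{\ell\in\cL}$ is feasible for $\cP(\Gamma)$. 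Its objective value is $\frac{1}{k}\cov(\Gamma)$, and since $\Phi(\Gamma)$ is by definition the minimum objective value over all feasible solutions of $\cP(\Gamma)$, we obtain $\Phi(\Gamma)\le\frac{1}{k}\cov(\Gamma)$, which rearranges to the claimed inequality.

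For the upper bound I would go in the other direction: take an optimal solution $(u^*(\ell))_{\ell\in\cL}$ of $\cP(\Gamma)$ with $\sum_{\ell}u^*(\ell)=\Phi(\Gamma)$ and round each coordinate up, setting $z(\ell)=\lceil k\,u^*(\ell)\rceil\in\mathbb{Z}_{\ge 0}$. Rounding upwards preserves the covering constraints: for any nonzero $(x,y)\in\Gamma$ we have $\sum_{\ell\ni(x,y)}z(\ell)\ge\sum_{\ell\ni(x,y)}k\,u^*(\ell)\ge k$, so $(z(\ell))_{\ell\in\cL}$ is feasible for $\cI(\Gamma,k)$. Using the termwise estimate $\lceil t\rceil\le t+1$, its objective value satisfies $\sum_{\ell}z(\ell)\le\sum_{\ell}\parens*{k\,u^*(\ell)+1}=k\Phi(\Gamma)+\size{\cL}$. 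Since $\cov(\Gamma)$ is the minimum objective value of $\cI(\Gamma,k)$, this feasible point yields $\cov(\Gamma)\le k\Phi(\Gamma)+\size{\cL}$.

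I do not expect a serious obstacle here; the only point requiring care is that the rounding in the upper bound must be done \emph{upwards}, so that feasibility of the covering constraints is retained, and that one then applies the crude bound $\lceil t\rceil\le t+1$ on every line, which is precisely what produces the additive error term $\size{\cL}$. If a sharper additive term were desired, one could instead begin from a basic optimal solution of $\cP(\Gamma)$, whose support has size at most the number of covering constraints, namely $\size{\Gamma}-1$, and round up only on that support; but the bound as stated already follows from the naive rounding of all coordinates.
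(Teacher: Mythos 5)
Your proof is correct and follows exactly the paper's argument: scale an optimal integer $k$-cover down by $k$ to get a feasible point of $\cP(\Gamma)$ for the lower bound, and round $k u^*(\ell)$ up coordinatewise to get a feasible point of $\cI(\Gamma,k)$ for the upper bound, with $\lceil t\rceil \le t+1$ producing the $\size{\cL}$ error term. Even your closing remark about restricting to the support of an optimal solution mirrors the paper's own observation following the proposition.
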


\begin{proof}
As previously established, $\cov(\Gamma)$ is the value of the integer linear program $\cI(\Gamma,k)$. If we let $(z(\ell) : \ell \in \cL)$ be a solution to the program, then setting $u(\ell) = \tfrac{1}{k} z(\ell)$ yields a feasible solution to the linear relaxation $\cP(\Gamma)$, with value $\sum_{\ell} u(\ell) = \tfrac{1}{k} \sum_{\ell} z(\ell) = \tfrac{1}{k} \cov(\Gamma)$. As $\Phi(\Gamma)$ is the minimum possible value of a feasible solution to $\cP(\Gamma)$, the first inequality follows.

For the upper bound, let $\parens*{u^*(\ell) : \ell \in \cL}$ be an optimal solution to the linear program $\cP(\Gamma)$, with value $\Phi(\Gamma)$. If we then set $z(\ell) = \ceil*{ku^*(\ell)}$ for all $\ell \in \cL$, we obtain a feasible solution to $\cI(\Gamma, k)$. Thus,
\[ \cov(\Gamma) \le \sum_{\ell} z(\ell) = \sum_{\ell} \ceil*{ku^*(\ell)} \le \sum_{\ell} \parens*{k u^*(\ell) + 1} = k \Phi(\Gamma) + \size{\cL}. \qedhere \]
\end{proof}

We remark that in practice one often obtains better error bounds; for instance, the $\size{\cL}$ term can be replaced by the size of the support of the optimal solution $u^*$. Furthermore, for an infinite sequence of multiplicities $k$, we can do away with the error term altogether. Indeed, since all the coefficients of $\cP(\Gamma)$ are integral, there is a rational optimal solution $u^*$ (the one returned by the Simplex Algorithm, for example). If $k$ is divisible by the common divisor of the fractions $u^*(\ell)$, then we can set $z(\ell) = k u^*(\ell)$ without needing any rounding, thereby obtaining a solution to $\cI(\Gamma,k)$ of value precisely $k \Phi(\Gamma)$.

\medskip

Therefore, asymptotically as $k$ tends to infinity, the problem reduces to determining $\Phi(\Gamma)$. We can provide upper bounds by finding feasible solutions to $\cP(\Gamma)$ or, better yet, constructing $k$-covers of $\Gamma$. To obtain lower bounds, we appeal to the theory of duality. The dual of $\cP(\Gamma)$, which we denote by $\cD(\Gamma)$, is the following linear program, where we have a variable $w(x,y)$ for each point $(x,y)\in \Gamma \setminus \{(0,0)\}$, which we call the weight of the point.
\begin{align}\label{eq:dual}
    &\text{maximize} \quad \sum\limits_{(x,y)\in \Gamma \setminus \{(0,0)\}} w(x,y)\\
    &\text{subject to}\notag\\
    &\quad\quad\quad\quad \sum\limits_{\substack{(x,y)\in \Gamma \setminus \{(0,0)\}:\\ (x,y)\in \ell}} w(x,y) \leq 1 \quad \text{for all }\ell\in \mathcal{L}\notag\\
    &\quad\quad\quad\quad w(x,y) \geq 0 \quad \text{for all }(x,y)\in \Gamma \setminus \{(0,0)\} \notag
\end{align}

For convenience, given a set $S \subseteq \Gamma \setminus \{(0,0)\}$, we write $w(S) = \sum_{(x,y) \in S} w(x,y)$ for the weight of $S$. The dual program thus asks for the maximum possible weight of the grid, provided every line in $\cL$ has weight at most $1$. By the duality theorem for linear programming (see~\cite[Section 6.1]{matousek2006understanding}), the programs $\cP(\Gamma)$ and $\cD(\Gamma)$ have the same optimal objective value $\Phi(\Gamma)$. We shall thus prove our lower bounds on $\cov(\Gamma)$ by finding suitably large feasible weights on the grid $\Gamma$.

\section{Wide rectangular grids}
\label{sec:ballserra}

In this section we will prove Theorems~\ref{thm:asymmetric2d} and~\ref{thm:tightboundsasymmetric}, establishing precisely when the Ball--Serra bound is tight for all grids of given dimensions. Our first result establishes the value of $\cov(\Gamma)$ for all $n \times m$ grids $\Gamma$ whenever $n \ge (k-1)(m-1) + 1$.

\begin{proof}[Proof of~\cref{thm:asymmetric2d}]
The Ball--Serra bound~\eqref{eq:ball-serra-bound} provides the requisite lower bound, as substituting $\size{S_1} = n$ and $\size{S_2} = m$ gives $\cov(\Gamma) \ge k(n-1) + m-1$. To prove a matching upper bound, we provide an explicit construction of a $k$-cover containing this many lines.

Write $S_2 = \set{0,t_1,\dots, t_{m-1}}$, and let $P_{1}\cup \dots \cup P_{m-1}$ be an arbitrary partition of $S_1\setminus\set{0}$ such that $\card{P_i}\geq k-1$ for all $i\in [m-1]$; such a partition exists since $n-1 \geq (k-1)(m-1)$.

Now, consider the following collection of lines:
\begin{enumerate}[label=(\roman*)]
    \item the line $y = t_i$ for all $i\in [m-1]$;\label{item:bshorizontallines}
    \item $k-1$ copies of the line $x = s$ for all $s\in S_1\setminus\set{0}$; \label{item:bsverticallines}
    \item the line connecting $(0,t_i)$ and $(s,0)$ for every $i\in [m-1]$ and $s\in P_i$.\label{item:diagonallines}
\end{enumerate}

 In total, this collection contains $m-1+(k-1)(n-1) + n-1 = k(n-1) + m-1$ lines. It remains to verify that these lines form a valid $k$-cover of $\Gamma$. Note first that no line in this collection passes through the origin $(0,0)$.

Any interior point of $\Gamma$ is covered $k$ times by the lines in~\ref*{item:bshorizontallines} and~\ref*{item:bsverticallines}, leaving us to check the boundary points. A point of the form $(s,0)$, where $s\in S_1\setminus\set{0}$, is covered $k-1$ times by the lines in~\ref*{item:bsverticallines} and once by the lines in~\ref*{item:diagonallines}. Finally, a point $(0,s)$ for $s\in S_2\setminus\set{0}$ is covered once by the lines in~\ref*{item:bshorizontallines} and at least $k-1$ times by the lines in~\ref*{item:diagonallines} since each $P_i$ has size at least $k - 1$. Hence, every nonzero point of $\Gamma$ is covered at least $k$ times, as required.
\end{proof}

We remark that the above construction can be generalised to higher dimensions to show that, for any $n_1\times \dots \times n_d$ grid $\Gamma(S_1,\dots, S_d)$ containing the origin, if $n_1\geq n_2\geq \dots \geq n_d$ and $n_1\geq \cov[k-1](\Gamma(S_2,\dots, S_d))+1$, then the Ball--Serra bound is tight for $\cov(\Gamma(S_1,\dots,S_d))$. Indeed,
write $S_1 = \set{0,s_1,\dots, s_{n_1-1}}$ and let $\cH = \set{H_1,\dots, H_{n_1-1}}$ be any collection of $n_1-1$ hyperplanes  in $\mathbb{R}^{d-1}$ containing a $(k-1)$-cover of $\Gamma(S_2, \hdots, S_d)$. We then form a $k$-cover of $\Gamma(S_1, S_2, \hdots, S_d)$ consisting of the following hyperplanes in $\mathbb{R}^d$:
\begin{itemize}
    \item[(i)] one copy of the hyperplane $x_i = t$ for all $i\in \set{2,\dots, d}$ and $t\in S_i\setminus\set{0}$;
    \item[(ii)] $k-1$ copies of the hyperplane $x_1=s$ for all $s\in S_1\setminus\set{0}$;
    \item[(iii)] the hyperplane spanned by $\set{0}\times H_i$ and $(s_i,0,\dots, 0)$ for all $i\in [n_1-1]$.
\end{itemize}

It is not difficult to check that this is indeed a $k$-cover of $\Gamma(S_1,\dots, S_d)$, and it consists of $\sum_{i=1}^d n_i + (k-1)n_1$ hyperplanes, which matches the Ball--Serra lower bound~\eqref{eq:ball-serra-bound}. However, it is not clear how good the lower bound on $n_1$ is; that is, how large $n_1$ needs to be with respect to the other dimensions $n_i$ in order to ensure that the Ball--Serra bound is tight.

In our next result, we show that the bound on $n_1$ in Theorem~\ref{thm:asymmetric2d} is best possible, since for generic grids that are slightly less wide, the Ball--Serra bound is no longer tight. In fact, we a give a general lower bound for $\cov(\Gamma)$ when $\Gamma$ is a generic $n \times m$ grid, and prove that this bound is tight for infinitely many choices of $k$. While we will not pursue this question further for higher dimensions in this paper, we remark that it was shown in~\cite{bakker_2021_thesis} that for the grid $\set{0,\dots, n_1-1}\times\set{0,\dots, n_2-1}\times\set{0,\dots, n_3-1}$ with $n_1\geq n_2\geq n_3$, the Ball--Serra bound is already tight when $n_1\geq (k-1)(n_2-1)+1$, which is an improvement on the bound given by the above construction.

\begin{proof}[Proof of Theorem~\ref{thm:tightboundsasymmetric}]

We wish to show that if $\Gamma = \Gamma(S_1, S_2)$ is a generic grid, where $S_1, S_2 \subseteq \mathbb{R}$ satisfy $0 \in S_1 \cap S_2$ and $\size{S_1} = n \ge m = \size{S_2}$, then we have $\cov(\Gamma) \ge k(n-1) + \frac{k}{n+m-2} (m-1)^2$. Appealing to the linear programming framework developed in Section~\ref{sec:linearprogramming}, it suffices to show $\Phi(\Gamma) \ge (n-1) + \frac{(m-1)^2}{n+m-2}$, which can be done by defining a weighting on the nonzero points of $\Gamma$ with this total weight in which every line in $\cL$ has weight at most $1$.

To that end, define the weighting $w: \Gamma \setminus \set{(0,0)} \to \mathbb{R}$ by
\[
    w((x,y)) = \begin{cases}
        \frac{n-1}{n+m-2} & \textrm{if } y = 0; \\
        \frac{m-1}{n+m-2} & \textrm{if } x = 0; \\
        \frac{1}{n+m-2} & \textrm{otherwise}.
    \end{cases}
\]

We start by computing the total weight of the grid:
\begin{align*}
    w\parens*{\Gamma \setminus \set{(0,0)}} &= (n-1) \frac{n-1}{n+m-2} + (m-1) \frac{m-1}{n+m-2} + (n-1)(m-1) \frac{1}{n+m-2} \\
    &= (n-1)\frac{n-1+m-1}{n+m-2} + \frac{(m-1)^2}{n+m-2} \\
    &= n-1 + \frac{(m-1)^2}{n+m-2},
\end{align*}
and so, provided this weighting is feasible, it gives the desired lower bound.

To establish its feasibility, let us consider a line $\ell \in \cL(\Gamma)$. First suppose $\ell$ contains two boundary points, say $(x,0)$ and $(0,y)$. Since $\Gamma$ is generic, $\ell$ cannot contain any other points, and hence $w(\ell) = w((x,0)) + w((0,y)) = \frac{n-1}{n+m-2} + \frac{m-1}{n+m-2} = 1$. Next, suppose $\ell$ is a horizontal line of the form $y=s$ for some $s \in S_2 \setminus \set{0}$. The line $\ell$ then contains one point on the $y$-axis and $n-1$ interior points, and thus $w(\ell) = \frac{m-1}{n+m-1} + (n-1)\frac{1}{n+m-2} = 1$. Finally, any other line $\ell$ can contain at most one boundary point and at most $m-1$ interior points $(x,y)$, one for each choice of $y \in S_2 \setminus \set{0}$. For such lines, we therefore have $w(\ell) \le \frac{n-1}{n+m-2} + (m-1)\frac{1}{n+m-2} = 1$.

Hence, the weighting $w$ is indeed feasible for the dual linear program $\cD(\Gamma)$, and has total weight $n-1 + \frac{(m-1)^2}{n+m-2}$, which proves $\cov(\Gamma) \ge k(n-1) + \frac{k}{n+m-2} (m-1)^2$.

\medskip

Now, given an arbitrary $n \times m$ grid $\Gamma = \Gamma(S_1, S_2)$, we provide a construction of a $k$-cover of $\Gamma$ that matches the bound proven above for an infinite sequence of multiplicities $k$, thereby determining $\cov(\Gamma)$ for generic grids $\Gamma$ and such multiplicities $k$.

Defining $a = \frac{n-1}{\gcd(n-1,m-1)}$ and $b = \frac{m-1}{\gcd(n-1,m-1)}$, we are given that $k$ is divisible by $a+b$. We further define $d_1 = \frac{bk}{a+b}$ and $d_2 = \frac{ak}{a+b}$, noting that our divisibility assumption ensures these are integers and that $d_1+d_2 = k$. Let $B$ be an arbitrary biregular bipartite multigraph with parts $S_1 \setminus \set{0}$ and $S_2 \setminus \set{0}$ with degrees $d_1$ in the first part and $d_2$ in the second. Note that such a multigraph exists, since $d_1 (n-1) = d_2 (m-1)$, and we can assign $d_1$ half-edges to each $s_1 \in S_1 \setminus \set{0}$ and $d_2$ half-edges to each $s_2 \in S_2 \setminus \set{0}$, and then take an arbitrary matching between the two sets of half-edges.

Next, consider the  following collection of lines:
\begin{enumerate}[label=(\roman*)]
    \item $d_2$ copies of the line $x = s_1$ for each $s_1 \in S_1 \setminus \{0\}$;\label{coverasym:vertical}
    \item $d_1$ copies of the line $y = s_2$ for each $s_2 \in S_2 \setminus \{0\}$; \label{coverasym:horizontal}
    \item for each $\set{s_1, s_2} \in E(B)$, a copy of the line connecting $(s_1,0)$ to $(0,s_2)$.\label{coverasym:diagonal}
\end{enumerate}

To see that these lines form a $k$-cover of $\Gamma$, observe that every interior point is covered by $d_2$ vertical lines from~\ref{coverasym:vertical} and $d_1$ horizontal lines from~\ref{coverasym:horizontal}, and is thus covered $d_1 + d_2 = k$ times in total. For the boundary points, a point of the form $(s_1,0)$ for $s_1\in S_1\setminus\set{0}$ is covered $d_2$ times by the lines in~\ref{coverasym:vertical}, while the biregularity of the multigraph $B$ ensures it is covered $d_1$ times by the lines in~\ref{coverasym:diagonal}. Similarly, each point of the form $(0,s_2)$ for $s_2\in S_2\setminus\set{0}$ is covered $d_1$ times by the lines in~\ref{coverasym:horizontal} and $d_2$ times by those in~\ref{coverasym:diagonal}. Thus, the boundary points are also each covered $k$ times. Finally, none of the lines in our collection passes through the origin.

We thus obtain our upper bound by calculating the size of this cover, which yields
\begin{align*} 
    \cov(\Gamma) &\le d_2(n-1) + d_1(m-1) + d_1(n-1) \\
    &= (d_1 + d_2)(n-1) + d_1(m-1) \\
    &= k(n-1) + \frac{bk}{a+b}(m-1) \\
    &= k(n-1) + \frac{k}{n+m-2}(m-1)^2,
\end{align*}
as required.
\end{proof}

\section{Square grids}
\label{sec:squaregrids}

In the previous section, we saw that the Ball--Serra bound is tight when the grid is much wider than it is tall, and proved a lower bound for generic grids that becomes much larger than the Ball--Serra bound as the dimensions grow closer in size. Therefore, for the rest of this paper, we focus on $n \times n$ grids. In Section~\ref{sec:squaregeneral} we prove Theorem~\ref{thm:squaregridsgeneral}, which provides general lower and upper bounds on $\cov(\Gamma)$ for arbitrary $n \times n$ grids $\Gamma$. In Section~\ref{sec:squarestandard} we focus on the standard grid $\Gamma_n = \Gamma\parens*{\set{0,1,\hdots,n-1},\set{0,1,\hdots,n-1}}$, proving Theorems~\ref{thm:standardgridbounds} and~\ref{thm:lowerboundthreetypes}.

\subsection{General results} \label{sec:squaregeneral}

We start by restating our general bounds for square grids.

\squaregridsgeneral*

We will prove the upper bound of (a) via an explicit construction of a $k$-cover, and use the linear programming framework of Section~\ref{sec:linearprogramming} to establish the lower bounds of (b) and (c).

\begin{proof}
(a) Note that when $k$ is even, the upper bound $\cov(\Gamma) \le \tfrac32 k (n-1)$ follows from the upper bound in Theorem~\ref{thm:tightboundsasymmetric} when $m = n$. We will obtain the upper bound for odd $k$ with some appropriate rounding, and present a unified construction below.

Let $\set{x_1, x_2, \hdots, x_{n-1} }$ be the nonzero elements of $S_1$, and let $\set{y_1, y_2, \hdots, y_{n-1}}$ be the nonzero elements of $S_2$. We form a $k$-cover of $\Gamma$ consisting of the following lines:
\begin{enumerate}
    \item[(i)] $\ceil*{\frac12 k}$ copies of the lines $x = x_i$ and $y = y_i$, for each $i \in [n-1]$;
    \item[(ii)] $\floor*{\frac12 k}$ copies of line connecting $(x_i,0)$ to $(0,y_i)$, for each $i \in [n-1]$.
\end{enumerate}
There are $2\ceil*{\frac12 k} (n-1)$ lines in (i) and $\floor*{\frac12 k}$ lines in (ii), and hence we have a total of $\ceil*{\frac32 k} (n-1)$ lines, and it is evident that none of these pass through the origin. To see that they form a $k$-cover, observe first that each interior point is covered by $\ceil*{\frac12 k}$ horizontal lines and $\ceil*{\frac12 k}$ vertical lines, and is thus covered at least $k$ times in total. Meanwhile, each boundary point is covered $\ceil*{\frac12 k}$ times by the lines in (i) and $\floor*{\frac12 k}$ times by the lines in (ii), and so is incident to exactly $k$ lines. These lines therefore indeed form a $k$-cover, showing $\cov(\Gamma) \le \ceil*{\frac32 k}(n-1)$.

\medskip

(b) To prove lower bounds, we appeal to the dual linear program $\cD (\Gamma)$. Our goal is to define a weighting $w$ of the nonzero points of $\Gamma$ of large total weight in which no origin-avoiding line has weight more than $1$. The key observation is that these lines can contain at most two boundary points, and so we can hope to get away with assigning large weights to the boundary points. 

We shall first try a simple weighting $w'$, where all boundary points obtain a weight of $\alpha$, and all interior points a weight of $\beta$, for $\alpha$ and $\beta$ to be chosen later. Unfortunately, this initial attempt does not work. Indeed, we have $w'(\Gamma) = 2(n-1) \alpha + (n-1)^2 \beta = \parens*{2 \alpha + (n-1) \beta} (n-1)$. Since there could be lines containing two boundary points and $n-2$ interior points, we must have $2\alpha + (n-2) \beta \le 1$. Hence, $w'(\Gamma) \le (1 + \beta)(n-1)$. As lines parallel to the axes contain an boundary point and $n-1$ interior points, we must also have $\alpha + (n-1) \beta \le 1$, which in particular implies $\beta \le \frac{1}{n-1}$. Thus, $w'(\Gamma) \le n$, and so the best lower bound we can hope for from such a weighting is $\cov(\Gamma) \ge kn$, which is worse than the Ball--Serra bound~\eqref{eq:ball-serra-bound} for $n \ge k + 2$.

\medskip

To salvage this idea, we will instead only assign weight to some of the boundary points, with the aim of ensuring that any origin-avoiding line containing two positively-weighted boundary points cannot contain too many interior points. For this, we use the following claim, bounding the number of points contained in certain lines.

\begin{clm} \label{clm:linesize}
    Suppose we enumerate the members of $S_1$ as $x_1 < x_2 < \hdots < x_n$ and those of $S_2$ as $y_1 < y_2 < \hdots < y_n$, and suppose $i_0 \in [n]$ is such that $x_{i_0} = 0$. Let $j\in [n]$ and $\ell$ be a line passing through $(0,y_j)$. If $\ell$ has positive slope, then $\ell$ contains at most $n - |j - i_0|$ points of $\Gamma$. If $\ell$ has negative slope, then $\ell$ contains at most $n - |(n-j) - i_0|$ points of $\Gamma$.
\end{clm}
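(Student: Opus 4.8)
The plan is to exploit the fact that, on a line of fixed-sign slope, the grid points are totally ordered compatibly in both coordinates, and then to simply count how many points can lie on each side of the pivot $(0,y_j)$. Suppose first that $\ell$ has positive slope and meets $\Gamma$ in exactly $t$ points; I would list them as $(x_{a_1},y_{b_1}),\dots,(x_{a_t},y_{b_t})$ with $a_1<\cdots<a_t$. Writing $y_{b_i}=y_j+s\,x_{a_i}$ with slope $s>0$ shows that the $x$- and $y$-coordinates increase together, so the $y$-indices are strictly increasing as well, $b_1<\cdots<b_t$. The pivot $(0,y_j)=(x_{i_0},y_j)$ is one of these points, say the $p$-th, so that $a_p=i_0$ and $b_p=j$.

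First I would bound the two sides of the pivot. The $p-1$ preceding points have $x$-index $<i_0$ and $y$-index $<j$, so $p-1\le\min(i_0-1,\,j-1)$; the $t-p$ following points have $x$-index $>i_0$ and $y$-index $>j$, so $t-p\le\min(n-i_0,\,n-j)$. Adding these together with the pivot gives
\[
 t \;\le\; \min(i_0,j)+\min(n-i_0,n-j).
\]
The elementary identity $\min(i_0,j)+\min(n-i_0,n-j)=n-|i_0-j|$, which one checks by splitting into the cases $j\ge i_0$ and $j<i_0$, then yields the positive-slope bound $t\le n-|j-i_0|$.

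For the negative-slope case I would reduce to the case just handled by reflecting the grid in the $x$-axis, i.e. replacing $S_2$ by $-S_2$. This reversal preserves all incidences with $\Gamma$ and turns negative-slope lines into positive-slope ones, while $y_j$, the $j$-th smallest element of $S_2$, becomes the $(n+1-j)$-th smallest of $-S_2$; applying the positive-slope bound with $j$ replaced by its reflected index then gives the analogous estimate for negative slope. (Equivalently, one can rerun the count directly: now the $y$-indices \emph{decrease} as the $x$-indices increase, so the two coordinate constraints become $p-1\le\min(i_0-1,n-j)$ and $t-p\le\min(n-i_0,j-1)$.) The only real care needed is the bookkeeping in this counting step, since each side of the pivot is constrained simultaneously by the available grid lines in the $x$-direction and in the $y$-direction and one must take the minimum of the two; I anticipate no substantive obstacle beyond this and the case-check of the min-identity. (One caveat worth flagging: carrying the reflection through carefully produces a negative-slope bound involving $n+1-j$ rather than $n-j$, so I would double-check the stated index against a small example such as $S_1=S_2=\{-1,0,1\}$ before finalizing the constant.)
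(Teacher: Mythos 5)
Your proof is correct and is essentially the paper's argument: for positive slope you exploit the monotone correspondence between $x$-indices and $y$-indices along the line (the paper phrases this as an injective map between $S_1^- = \{x \in S_1 : x \le 0\}$ and $S_2^- = \{y \in S_2 : y \le y_j\}$ and counts the coordinates the line must miss, while you count points on either side of the pivot and use the identity $\min(i_0,j)+\min(n-i_0,n-j)=n-|i_0-j|$; these are the same bookkeeping), and for negative slope both you and the paper reflect in the $x$-axis.

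The caveat you flag at the end is in fact a genuine off-by-one in the paper, and your index is the correct one. Sorting $-S_2$ gives $y'_i = -y_{n+1-i}$, not $y'_i = -y_{n-i}$ as the paper writes, so the reflected line has positive slope through $(0,y'_{n+1-j})$ and the negative-slope bound should read $n - |(n+1-j) - i_0|$ rather than the stated $n - |(n-j) - i_0|$. Your suggested sanity check confirms this: for $S_1 = S_2 = \{-1,0,1\}$ (so $n=3$, $i_0 = 2$) and $j = 3$, the line $y = 1-x$ has negative slope, passes through $(0,y_3)=(0,1)$, and contains the two grid points $(0,1)$ and $(1,0)$, whereas the stated bound would allow only $3 - |(3-3)-2| = 1$; your corrected bound gives $3 - |1-2| = 2$, which is tight. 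Your parenthetical direct count for negative slope ($p-1 \le \min(i_0-1,\,n-j)$ and $t-p \le \min(n-i_0,\,j-1)$) independently yields the same $n+1-j$ version. The slip is harmless downstream: in the proof of Theorem~\ref{thm:squaregridsgeneral}\ref{thm:squaregridslb} it only perturbs the constraint $2\beta + (n-t-2)\alpha \le 1$ by a single $\alpha = O(1/n)$ term, which is absorbed in the $o(1)$, but your version of the claim is the one that is literally true.
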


\begin{proof}[Proof of~\cref{clm:linesize}] Suppose first that $\ell$ is a line of positive slope passing through $(0,y_j)$. We define $S_1^- = \set{x \in S_1: x \le 0}$ and $S_2^- = \set{ y \in S_2: y \le y_j}$, observing that $\card{S_1^-} = i_0$ and $\card{S_2^-} = j$. Since $\ell$ is of positive slope, it follows that, if $(x,y) \in \ell$ for any $x \in S_1^-$, we must have $y \in S_2^-$.

If $j \ge i_0$, then, since each value in $S_1^-$ can correspond to at most one value in $S_2^-$, it follows that there will be at least $j - i_0$ coordinates in $S_2^-$, and thus in $S_2$, that are not mapped to by $\ell$, and so $\ell$ can contain at most $n - (j - i_0)$ points of $\Gamma$. Similarly, if $j < i_0$, since each value in $S_2^-$ corresponds to at most one value from $S_1^-$, there will be at least $i_0 - j$ values in $S_1$ not covered by $\ell$, and so $\ell$ contains at most $n - (i_0 - j)$ points from $\Gamma$. This shows that lines of positive slope through $(0,y_j)$ can contain at most $n - |j - i_0|$ points of $\Gamma$.

For lines of negative slope, we can reflect the grid in the $x$-axis, considering $S_2' = \set{ -y: y \in S_2}$. This reverses the ordering of the elements, so $y'_i = - y_{n-i}$. The line $\ell$ then corresponds to a line $\ell'$ of positive slope passing through $(0,y'_{n-j})$, and thus contains at most $n - |(n-j) - i_0|$ points of the grid.
\end{proof}

With this claim in mind, we now define an improved weighting of the points of $\Gamma$. As in the claim, enumerate the elements of $S_1$ as $x_1 < x_2 < \hdots < x_n$ and those of $S_2$ as $y_1 < y_2 < \hdots < y_n$, and let $i_0, j_0 \in [n]$ be such that $x_{i_0} = 0$ and $y_{j_0} = 0$. Given parameters $\alpha, \beta$ and $t$, to be chosen later, we define the weights on $\Gamma \setminus \set{ (0,0) }$ as follows:
\[ w( (x_i,y_j) ) = 
    \begin{cases}
        \alpha & \textrm{if } i \neq i_0 \textrm{ and } j \neq j_0; \\
        \beta & \textrm{if } j = j_0 \textrm{, or if } i = i_0 \textrm{ and } \min \set{|j-i_0|,|n-j-i_0|} \ge t; \\
        0 & \textrm{otherwise}.
    \end{cases}        
\]
That is, we assign weight $\alpha$ to all interior points and weight $\beta$ to all boundary points except those in intervals around $y_{i_0}$ and $y_{n-i_0}$ on the $y$-axis.

For the weighting to be valid, we require that each origin-avoiding line have weight at most $1$. If the line $\ell$ contains two boundary points with positive weight, then let it pass through $(0,y_j)$ on the $y$-axis. By definition of $w$, we must have $|j-i_0|,|(n-j) - i_0| \geq t$, and so by Claim~\ref{clm:linesize}, $\ell$ contains at most $n-t$ points of $\Gamma$, and hence at most $n-t-2$ interior points. Thus, the weight of any such line is at most $2 \beta + (n-t-2) \alpha$.

Otherwise, the line $\ell$ can contain at most one weighted boundary point and at most $n-1$ interior points, giving a total weight of not more than $\beta + (n-1) \alpha$. Hence, our parameters must satisfy  $2 \beta + (n-t-2) \alpha \le 1$, $\beta + (n-1) \alpha \le 1$, $\alpha, \beta \ge 0$, and $t \in \mathbb{N}$.

With regards to the objective function, we note that there are at most $2(2t-1)$ boundary points with weight zero, and thus $w(\Gamma) \ge 2(n - 2t) \beta + (n-1)^2 \alpha$, and we wish to maximise this quantity subject to the constraints above. Some routine calculations then yield that we should set $t = \ceil*{\tfrac12 \sqrt{(5n+1)(n-1)} - n}$, $\alpha = \frac{1}{n+t}$ and $\beta = \frac{t+1}{n+t}$, for which we have $w(\Gamma) = \left( 10 - 4 \sqrt{5} + o(1) \right) (n-1)$. Proposition~\ref{prop:linprogbound} gives the desired lower bound:
\[ \cov(\Gamma) \ge k \Phi(\Gamma) \ge k w(\Gamma) = \left( 10 - 4 \sqrt{5} + o(1) \right) k (n-1). \]

\medskip

(c) For the final part of the theorem, we assume the grid $\Gamma$ is $\Delta$-generic, meaning that any line through two boundary points can contain at most $\Delta$ interior points. In the framework of part (b), where we assign a weight of $\alpha$ to all interior points, and a weight of $\beta$ to all boundary points, we then obtain the constraint $\beta + (n-1)\alpha \le 1$ from the lines with at most one boundary point, and the constraint $2 \beta + \Delta \alpha \le 1$ from the lines with two boundary points. The total weight, which we seek to maximise, is $2(n-1)\beta + (n-1)^2 \alpha$.

This optimisation problem is solved by taking $\beta = 1 - \frac{n-1}{2(n-1)-\Delta}$ and $\alpha = \frac{1}{2(n-1) - \Delta}$. It is readily verified that both constraints are then satisfied with equality, and the total weight of the grid is $\left[ 2 - \frac{n-1}{2(n-1) - \Delta} \right] (n-1)$, whence the result follows by once again applying Proposition~\ref{prop:linprogbound}.
\end{proof}

A few remarks are in order at this point. First, we note that the lower bound of part~\ref{thm:squaregridslb} can be improved if we have additional information about where the origin is in the grid. For example, suppose the origin is in the lower-left corner; that is, $\min S_1 = \min S_2 = 0$. Then any line containing two boundary points must be of negative slope, and hence when we apply Claim~\ref{clm:linesize}, we see that it is enough to leave only the largest values on the $y$-axis unweighted. When one solves the corresponding optimisation problem, we find that $\cov(\Gamma) \ge \left( 4 - 2 \sqrt{2} + o(1) \right) k (n-1)$ for such grids $\Gamma$, a considerable improvement in the constant factor, as $4-2\sqrt{2}\approx 1.1716$.

Second, as explained in the introduction, almost all grids $\Gamma$ are generic, and parts~\ref{thm:squaregridsub} and~\ref{thm:squaregridsgeneric} of Theorem~\ref{thm:squaregridsgeneral} determine $\cov(\Gamma)$ precisely when $k$ is even and asymptotically when $k$ is odd and large. However, even when the grid $\Gamma$ is not generic but only $\Delta$-generic, provided $\Delta = o(n)$, part~\ref{thm:squaregridsgeneric} is robust enough to resolve the problem asymptotically. We give some natural examples of this below.

\begin{cor}
    Given $n \in \mathbb{N}$, let $\Gamma_{\mathrm{exp},n} = \Gamma(E,E)$, where $E = \set{0,1,2,4,\hdots,2^{n-2}}$, and let $\Gamma_{\mathrm{quad},n} = \Gamma(S,S)$, where $S = \set{0,1,4,\hdots, (n-1)^2}$. Then, if $\Gamma \in \set{ \Gamma_{\mathrm{exp},n}, \Gamma_{\mathrm{quad},n} }$, we have $\cov(\Gamma) = \left( \frac32 + o(1) \right) k(n-1)$ as $k,n \to \infty$.
\end{cor}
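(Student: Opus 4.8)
The plan is to sandwich $\cov(\Gamma)$ between the general upper bound of Theorem~\ref{thm:squaregridsgeneral}\ref{thm:squaregridsub} and the $\Delta$-generic lower bound of part~\ref{thm:squaregridsgeneric}, once we check that both grids are $\Delta$-generic with $\Delta = o(n)$. The upper bound is immediate: part~\ref{thm:squaregridsub} gives $\cov(\Gamma) \le \ceil*{\tfrac32 k}(n-1) = \parens*{\tfrac32 + o(1)} k(n-1)$ as $k \to \infty$. For the lower bound, part~\ref{thm:squaregridsgeneric} yields $\cov(\Gamma) \ge \brackets*{2 - \frac{n-1}{2(n-1) - \Delta}} k(n-1)$, and whenever $\Delta = o(n)$ the factor $\frac{n-1}{2(n-1)-\Delta}$ tends to $\tfrac12$, so the bracket tends to $\tfrac32$ and we obtain $\parens*{\tfrac32 - o(1)} k(n-1)$. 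Combining the two directions gives the claimed $\parens*{\tfrac32 + o(1)} k(n-1)$, so the entire task reduces to bounding $\Delta$ for each of the two grids.

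To estimate $\Delta$, note that a line through two boundary points on the same axis is that axis and contains no interior point, so the only relevant lines $\ell$ pass through some $(a,0)$ and $(0,b)$ with $a,b$ nonzero elements of the common ground set. Since all elements of $E$ and $S$ are nonnegative, every interior point $(x,y)$ has $x,y>0$, and such a point lies on $\ell\colon x/a + y/b = 1$ only if $0<x<a$ and $0<y<b$. For $\Gamma_{\mathrm{exp},n}$, write $a = 2^i$, $b = 2^j$ and an interior point as $(2^p,2^q)$ with $p<i$, $q<j$; the incidence condition is $2^{p-i} + 2^{q-j} = 1$ with both exponents strictly negative. The only way to write $1$ as a sum of two powers of $2$ with negative exponents is $2^{-1} + 2^{-1}$, forcing $p = i-1$ and $q = j-1$. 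Hence every such $\ell$ carries at most one interior point and $\Gamma_{\mathrm{exp},n}$ is $1$-generic, which comfortably satisfies $\Delta = o(n)$.

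For $\Gamma_{\mathrm{quad},n}$ the analysis is genuinely number-theoretic, and this is the main obstacle. Writing $a = r^2$, $b = s^2$ and an interior point as $(p^2,q^2)$, the incidence condition $p^2/r^2 + q^2/s^2 = 1$ becomes $(sp)^2 + (rq)^2 = (rs)^2$. Distinct interior points on $\ell$ give distinct representations of $(rs)^2$ as an ordered sum of two positive squares, so the number of interior points on $\ell$ is at most the number $r_2\parens*{(rs)^2}$ of such representations, which is $O\parens*{d\parens*{(rs)^2}}$ where $d$ is the divisor function. The divisor bound $d(M) = M^{o(1)}$, together with $(rs)^2 \le (n-1)^4$, then gives a uniform estimate of $n^{o(1)} = o(n)$ over all choices of $r,s$, so $\Gamma_{\mathrm{quad},n}$ is $\Delta$-generic with $\Delta = o(n)$. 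The exponential case is an elementary base-$2$ uniqueness argument, but the quadratic case relies crucially on the fact that $(rs)^2$ has subpolynomially many representations as a sum of two squares; this is exactly what forces $\Delta$ to be subpolynomial in $n$, hence $o(n)$, and closes the argument via Theorem~\ref{thm:squaregridsgeneral}\ref{thm:squaregridsgeneric}.
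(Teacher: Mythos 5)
Your proof is correct and follows essentially the same route as the paper: the same sandwich between Theorem~\ref{thm:squaregridsgeneral}\ref{thm:squaregridsub} and Theorem~\ref{thm:squaregridsgeneral}\ref{thm:squaregridsgeneric}, the same $1$-genericity argument for $\Gamma_{\mathrm{exp},n}$, and the same reduction for $\Gamma_{\mathrm{quad},n}$ to counting representations of $(rs)^2$ as a sum of two positive squares. The only divergence is the final estimate, where you invoke the standard divisor bound $d(M) = M^{o(1)}$ (which, applied uniformly with $(rs)^2 \le (n-1)^4$, does give $\Delta = o(n)$), whereas the paper derives an explicit bound $\Delta \le 3n^{\frac43 \log_5 3} = o(n)$ from Beiler's formula $\prod_{q \in Q}(\beta_q + 1)$ over the primes $q \equiv 1 \pmod 4$ dividing $ab$ --- both steps are valid.
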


\begin{proof}
    By Theorem~\ref{thm:squaregridsgeneral}\ref{thm:squaregridsub}, we know $\cov(\Gamma) \le \ceil*{\frac32k} (n-1)$, which is $\left(\frac32 + o(1)\right) k(n-1)$ as $k \to \infty$, and so we need only demonstrate the lower bound. By Theorem~\ref{thm:squaregridsgeneral}\ref{thm:squaregridsgeneric}, it suffices to show $\Gamma$ is $\Delta$-generic for some $\Delta = o(n)$.

    We begin with $\Gamma = \Gamma_{\mathrm{exp},n}$, and show that is $1$-generic. Indeed, suppose $\ell$ is a line containing two boundary points of $\Gamma$, say $(0,2^i)$ and $(2^j,0)$. It is then a straightforward calculation to see that, for any $r$, the line $\ell$ passes through $(2^r, 2^i - 2^{i-j+r})$. In order for this to be an interior point of the grid $\Gamma_{\mathrm{exp},n}$, we require that $2^i - 2^{i-j+r}$ is a positive power of $2$ strictly smaller than $2^i$, which only happens for $r = j-1$. Thus, the line $\ell$ contains exactly one interior point, and hence $\Gamma_{\mathrm{exp},n}$ is $1$-generic.

    \medskip
    
    The quadratic grid $\Gamma_{\mathrm{quad},n}$ requires somewhat more delicate treatment. Again, let us suppose $\ell$ is a line containing the boundary points $(0,a^2)$ and $(b^2,0)$, and let $(r^2, s^2)$ be an interior point lying on $\ell$. We then have $s^2 = a^2 \parens*{1 - \frac{r^2}{b^2}}$, or $(ab)^2 = (ar)^2 + (bs)^2$. Thus, we can bound the number of interior points on $\ell$ by the number of ways of writing $(ab)^2$ as a sum of squares. Following Beiler~\cite{Bei66}, if $Q$ is the set of prime divisors of $ab$ that are congruent to $1$ modulo $4$, and if $\beta_q$ is the multiplicity of $q \in Q$ in the prime factorisation of $(ab)^2$, then there are $\prod_{q \in Q} \parens*{\beta_q + 1}$ ways to write $(ab)^2$ as a sum of squares.
    
    To simplify the notation in the calculation below, we shall assume $q = 5, 13$ and $17$ are included in $Q$, setting $\beta_q = 0$ in case they do not divide $ab$. Now observe that we have 
    \begin{align*} 
    \sum_{q \in Q} \parens*{\beta_q + 1} &= 3 +  \beta_5 + \beta_{13} + \beta_{17} + \sum_{q \in Q, q \ge 29} \parens*{\beta_q + 1} \\ 
    &\le 3 + \beta_5 + \beta_{13} + \beta_{17} + \sum_{q \in Q, q \ge 29} 2 \beta_q \\
    &\le 3 + \sum_{q \in Q} \beta_q \log_5 q = 3 + \log_5 \parens*{\prod_{q \in Q} q^{\beta_q}} \le 3 + \log_5 \parens*{(ab)^2},
    \end{align*}
    and so $\sum_{q \in Q} (\beta_q + 1) \le 3 + 4 \log_5 n$.

    Now, given some natural numbers $m_i$, it is simple to show that $\prod_i m_i \le 3^{\tfrac13 \sum_i m_i}$, and hence the number of ways to express $(ab)^2$ as a sum of squares is at most $3^{\tfrac13 \sum_{q \in Q} (\beta_q + 1)} \le 3 \cdot 3^{\tfrac43 \log_5 n} = 3n^{\tfrac43 \log_5 3}$. It thus follows that $\Gamma_{\mathrm{quad},n}$ is $\Delta$-generic for $\Delta = 3n^{\tfrac43 \log_5 3} = o(n)$.
\end{proof}

\subsection{Standard grids} \label{sec:squarestandard}

While the results of Section~\ref{sec:squaregeneral} resolve the problem asymptotically for very many grids, there is no questioning the fact that the most natural case to consider is that of the standard grid $\Gamma(S, S)$, where $S = \set{0, 1, \hdots, n-1}$. For convenience, we denote this grid by $\Gamma_n$. By considering the line $x + y = n-1$, we see that $\Gamma_n$ is not $\Delta$-generic for any $\Delta < n-2$, which means the lower bound of Theorem~\ref{thm:squaregridsgeneral}\ref{thm:squaregridsgeneric} is worse than the Ball--Serra bound for $n > k$. By tailoring our methods for this specific grid, though, we will obtain much better bounds. We begin by showing that a strict $k$-cover of the standard grid requires far fewer lines than the upper bound given by~\cref{thm:squaregridsgeneral}, which, as shown in the previous subsection, is asymptotically tight for most grids.

\begin{proof}[Proof of~\cref{thm:standardgridbounds} (upper bound)]
   We construct a $k$-cover of $\Gamma_n$ of size $\parens{\sqrt{2}+o(1)}k(n-1)$. Let $t\in [n-1]$ be a parameter, to be determined later, and consider the following collection of lines:
   \begin{enumerate}[label=(\roman*)]
        \item $\ceil*{\frac{i}{n+t-1}k}$ copies of the lines $x=i$ and $y=i$ for each $i\in [n-1]$;\label{list:standard_ub_par}
        \item $k - \ceil*{\frac{i}{n+t-1}k}$ copies of the line $x+y=i$ for every $1\leq i < n+t-1$. \label{list:standard_ub_diag}
   \end{enumerate}

   We begin by showing that the above collection of lines gives a $k$-cover of $\Gamma_n$. First, it is clear that no line passes through the origin. Now consider a point $(s_1,s_2)\in \Gamma_n\setminus\set{(0,0)}$. 
   If $s_1+s_2< n+t-1$, then $(s_1,s_2)$ is covered $\ceil*{\frac{s_1}{n+t-1}k} + \ceil*{\frac{s_2}{n+t-1}k}$ times by the lines in~\ref*{list:standard_ub_par} and another $k-\ceil*{\frac{s_1+s_2}{n+t-1}k}$ times by those in~\ref*{list:standard_ub_diag}, and thus at least $k$ times in total. On the other hand, if $s_1+s_2\geq n+t-1$, then  lines in~\ref*{list:standard_ub_par} alone cover the point $\ceil*{\frac{s_1}{n+t-1}k} +\ceil*{\frac{s_2}{n+t-1}k} \geq k$ times, as required. Calculating the size of this $k$-cover, we obtain 
   \begin{align}
       \cov(\Gamma_n) &\leq 2\sum\limits_{i=1}^{n-1} \ceil*{\frac{i}{n+t-1}k}  + \sum\limits_{i=1}^{n+t-2}\parens*{k-\ceil*{\frac{i}{n+t-1}k}} \notag \\
       &\leq k\brackets*{2\sum\limits_{i=1}^{n-1} {\frac{i}{n+t-1}}  + \sum\limits_{j=1}^{n+t-2}{\frac{j}{n+t-1}}} + 2n \notag\\
       &\leq k\brackets*{ \frac{2}{n+t-1}\binom{n}{2}  + \frac{1}{n+t-1}\binom{n+t-1}{2} } + 2n \notag\\
       &= k \brackets*{\frac{n(n-1)}{n+t-1} + \frac{n+t-2}{2}} + 2n.\label{eq:totalweight}
   \end{align}
   
   The upper bound given by~\eqref{eq:totalweight} is valid for any $t\in [n-1]$; we now want to choose a value of  $t$ that makes the right-hand side as small as possible. The function $g(t) = \frac{n(n-1)}{n+t-1} + \frac{n+t-2}{2}$ has its minimum at $t_0 = \sqrt{2(n-1)n} - (n-1)$.
   Since our parameter must be an integer, we choose $t = \ceil*{\sqrt{2(n-1)n} - (n-1)} = (\sqrt{2}-1+o(1))(n-1)$, and substituting this value of $t$ into~\eqref{eq:totalweight} yields the claimed upper bound of  $\parens{\sqrt{2}+o(1)}k(n-1)$ on $\cov(\Gamma_n)$. 
\end{proof}

    We now turn our attention to the lower bound.

    \begin{proof}[Proof of~\cref{thm:standardgridbounds} (lower bound)]
    By~\cref{prop:linprogbound}, it suffices to find a feasible solution to the dual linear program $\cD(\Gamma_n)$, that is, a weighting of the nonzero points of $\Gamma_n$ in which every origin-avoiding line has weight at most 1, that has total weight at least $(2 - e^{-1/2} +o(1)) (n-1)$.
    
    Let $t$ be the largest integer such that $\sum_{i=1}^t \frac{1}{n-i} \le \tfrac12$ and consider the following weighting on the points of $\Gamma_n\setminus\set{(0,0)}$:
    \[
    w((x,y)) = \begin{cases}
        \frac12 & \textrm{if } xy = 0; \\
        \frac{1}{n-i} & \textrm{if } x + y =  n-1+i \textrm{ for some } i\in [t]; \\
        0 & \textrm{otherwise}.
    \end{cases}
    \]

    We first show that $(w((x,y)):(x,y)\in \Gamma_n)$ gives a feasible solution to the dual linear program $\cD(\Gamma_n)$. Clearly $w((x,y)) \geq 0$ for all $(x,y)\in \Gamma_n\setminus\set{(0,0)}$. Now, let $\ell\in \mathcal{L}$ be any line. If $\ell$ contains two boundary points, then any interior point $(x,y)$ on $\ell$ satisfies $x + y \le n-1$, and thus has weight zero. It follows that $w(\ell) = 1$. Otherwise, if $(x,y) \mapsto x+y$ is constant on $\ell$, then $\ell = \{(x,y): x+y = n-1+i\}$ for some $i \in [n-1]$. Then all points on $\ell$ have weight zero, unless $1 \le i \le t$, in which case $\ell$ contains $n-i$ points of weight $\tfrac{1}{n-i}$ each. Thus $w(\ell) \le 1$ in this case. Finally, if $(x,y) \mapsto x+y$ is not constant on $\ell$, it must be injective. Then, $\ell$ contains at most one boundary point, which has weight $\frac12$, and the weight from the remaining points is at most $\sum_{i=1}^t \tfrac{1}{n-i}$, which by the choice of $t$ is at most $\tfrac12$. So in total we again have $w(\ell)\leq 1$.

    \medskip

    To compute the total weight of the grid, observe that each diagonal line of the form $x + y = i$ has weight one if $1 \le i \le n-1+t$ and zero otherwise. Thus, the total weight of the grid is $n-1+t$.

    It remains to estimate $t$. Note that $t \le \tfrac{n}{2}$, since $\sum_{i=1}^t \tfrac{1}{n-i} \ge \sum_{i=1}^t \tfrac{1}{n} = \tfrac{t}{n}$, and so both $n-1$ and $n-1-t$ go to infinity linearly with $n$.
    It is well known that, as $m\to\infty$, the partial sums $H_m$ of the Harmonic series satisfy $H_m = \sum_{j=1}^m \tfrac{1}{j} = \log m + \gamma + o(1)$, where $\gamma$ is a constant.  Hence,
    \begin{align*}
        \sum_{i=1}^t \frac{1}{n-i} &= H_{n-1} - H_{n-1-t}
        = \log \left( \frac{n-1}{n-1-t} \right) + o(1).
    \end{align*}

    Thus, we must have $\log \left( \frac{n-1}{n-1-t} \right) = \tfrac12 + o(1)$, or $\log \left( 1 - \frac{t}{n-1} \right) = -\tfrac{1}{2} + o(1)$. This gives $1 - \tfrac{t}{n-1} = e^{-1/2} + o(1)$, or $t = \left( 1 - e^{-1/2} + o(1) \right) (n-1)$, which results in the claimed bound.
    \end{proof}

The cover we constructed for the upper bound only uses lines of slope $0$, $\infty$, and $-1$. This may seem rather limited, and it is natural to wonder if one can do better by making use of other lines as well. However, we verified by computer search that for small values of $n$ and $k$, one is always able to build an optimal $k$-cover consisting only of these three types of lines. This motivated us to further study this restricted class of $k$-covers, and in our final result we prove that the smallest $k$-cover of this form has size $\parens*{\sqrt{2} + o(1)} k(n-1)$.
    
    \begin{proof}[Proof of~\cref{thm:lowerboundthreetypes}]
    For convenience, we will hereon call lines of slope $-1$ \emph{diagonals}. Since the cover constructed for the upper bound in~\cref{thm:standardgridbounds} only uses horizontal, vertical, and diagonal lines, it also provides the same upper bound of $\parens*{\sqrt{2} + o(1)}k(n-1)$ in this restricted setting.
    
    To obtain a matching lower bound, we will again appeal to the linear programming approach. Recall that previously we obtained lower bounds by assigning weights to the points, whose sum was as large as possible, provided that the total weight along every origin-avoiding line was at most $1$. In this restricted setting, since we are only able to use horizontal, vertical, and diagonal lines, the dual linear program only has constraints on the weights along these lines. This gives us much more freedom in choosing the weights, and thus we may hope to find a feasible weighting with a larger sum.

    Observe that in our search for an optimal weighting, we have $n^2 - 1$ degrees of freedom (the weights of the individual nonzero points) but only $4(n-1)$ constraints (the horizontal, vertical, and diagonal lines). In order to reduce the search space and simplify our task, we shall impose the following additional conditions on the weighting $w$.
    \begin{itemize}
    \item The weighting is symmetric across the main diagonal; that is, $w(x,y) = w(y,x)$ for all $(x,y)\in \Gamma_n\setminus\set{(0,0)}$.
    \item On each diagonal, every interior point has the same weight.
    \item Every vertical line avoiding the origin has weight one (and hence so does every horizontal line).
    \item There is some $t \in [n-1]$ such that the weight of the diagonal $x + y = i$ is $1$ if $1 \le i \le n + t - 1$, is at most $1$ if $i = n+t$, and is $0$ if $i \ge n + t + 1$.
    \end{itemize}

    Some remarks are in order now. First, note that the requirement of symmetry is without loss of generality, since if $w$ is any feasible weighting, then $w'$ defined by $w'(x,y) = \tfrac12 \parens*{w(x,y) + w(y,x)}$ is a symmetric feasible weighting of the same total weight.

    With regards to the total weight, by summing along the diagonals, we see that $w\parens{\Gamma_n \setminus \set{(0,0)}}$ is at least $n+t-1$ and at most $n+t$. Thus, our goal is to maximise the value of $t$ for which we can find such a feasible weighting.

    Now, for some fixed $t$, observe that the weights of approximately half the points are already determined by the conditions. Since the diagonals $x + y = i$, for $i \ge n + t + 1$, have weight $0$, we must have $w(x,y) = 0$ whenever $x + y > n + t$. On the other hand, if $n \le i \le n + t - 1$, then we know the diagonal has weight $1$. Since the diagonal consists entirely of $2n - 1 - i$ internal points, all of which must have the same weight, we have $w(x,y) = \frac{1}{2n - 1 - (x+y)}$ whenever $n \le x + y \le n+t-1$. Finally, when $x + y = n+t$, we must have $w(x,y) = z$ for some $0 \le z \le \frac{1}{n - t - 1}$ to ensure the diagonal has weight at most $1$.

    We now turn our attention to the lower triangular points of the grid; that is, $(x,y)$ with $1 \le x + y \le n-1$. In our previous weighting, we assigned weight $\tfrac12$ to the boundary points and left the interior points unweighted. Now that we have some more freedom, we will look to spread the weight throughout the grid. With that in mind, we introduce parameters $\alpha_i$, $1 \le i \le n-1$, such that $w(i,0) = w(0,i) = \tfrac12 - \alpha_i$. Since the diagonal $x + y = i$ has total weight $1$, it follows that $w(x,y) = \frac{2 \alpha_{x + y}}{x + y - 1}$ for the $x + y - 1$ interior points $(x,y)$ with $x + y = i$, $x,y \neq 0$.

    Thus, using the symmetry and the conditions along the diagonals, we have shown that our weighting takes the form
    \[
    w((x,y)) = \begin{cases}
        \frac{1}{2} - \alpha_{x+y} & \textrm{if } x=0 \textrm{ or } y = 0; \\
        \frac{2\alpha_{x+y}}{x+y-1} & \textrm{if } x,y \neq 0 \textrm{ and } 1\leq x+y\leq n-1; \\
         \frac{1}{2n-1-i} & \textrm{if } x+y = i \textrm{ for some } n\leq i\leq n+t-1; \\
         z & \textrm{if } x+y = n+t; \\
         0 & \textrm{if } x+y \ge n+t+1.
    \end{cases}
    \]
    for some parameters $\alpha_1, \hdots, \alpha_{n-1}, z \in \mathbb{R}_{\ge 0}$. To finish, we will use the condition that the vertical lines have total weight $1$ to solve for $\alpha_i$.

    Indeed, by considering the line $x = n-1$, we have $\tfrac12 - \alpha_{n-1} + \sum_{i=n}^{n+t-1} \frac{1}{2n-1-i} + z = 1$, which yields
    \begin{equation}\label{eq:constrn-1}
        \alpha_{n-1} = \sum\limits_{i=n}^{n+t-1} \frac{1}{2n-1-i} + z -\frac{1}{2}.
    \end{equation}

    Now compare the weights of the points on the lines $x = n-2$ and $x = n-1$. Since the diagonals are constant along their interior points, we have $w(n-2,y) = w(n-1,y-1)$ for all $2 \le y \le n-1$. Hence the differences are that $w(n-2,0)$ and $w(n-2,1)$ replace $w(n-1,0)$ and $w(n-1,n-1)$. Thus, 
    \begin{align*}
        w\parens*{\set*{x = n-2}} - w\parens*{\set*{x = n-1}} &= w(n-2,0) + w(n-2,1) - w(n-1,0) - w(n-1,n-1) \\
        &= \tfrac12 - \alpha_{n-2} + \frac{2 \alpha_{n-1}}{n-2} - \parens*{\tfrac12 - \alpha_{n-1}} - 0,
    \end{align*}
    and since both vertical lines have weight $1$, this gives $\alpha_{n-2} = \parens*{1 + \frac{2}{n-2}} \alpha_{n-1}$. Repeating this argument for the lines $x = i-1$ and $x = i$ for each $2 \le i \le n-1$, we obtain the following recurrence relation:
    
    \begin{equation}\label{eq:alpharecursion}
        \alpha_{i-1} = \parens*{1+\frac{2}{i-1}}\alpha_{i} - z\mathbf{1}_{i=t+1} -\frac{1}{n-i} \mathbf{1}_{i\leq t},
    \end{equation}
    where $\mathbf{1}_{A}$ is the indicator function of the event $A$ defined as
    \[
    \mathbf{1}_{A} = \begin{cases}
        1 & \textrm{if $A$ is true}; \\
        0 & \textrm{if $A$ is false}. \\
    \end{cases}
    \]

    For the initial condition, observe that the line $x + y = 1$ has no interior points, and so for it to have weight $1$, we must have $\alpha_1 = 0$. Combining this with~\eqref{eq:alpharecursion}, we obtain:
    \begin{equation}
        \alpha_{i} = \frac{1}{i(i+1)}\sum\limits_{j=1}^{\min\set{t, i}} \frac{j(j-1)}{n-j} + \mathbf{1}_{i\geq t+1}\frac{t(t+1)}{i(i+1)}z  \quad \text{ for all } 2\leq i\leq n-1.\label{eq:alphavalue}
    \end{equation}
    Substituting the value of $\alpha_{n-1}$ from~\eqref{eq:alphavalue} into~\eqref{eq:constrn-1}, we can then solve for $z$ to obtain:
    \begin{align}\label{eq:zvalue}
        z = \parens*{\frac{1}{2} - \sum\limits_{j=1}^t\frac{1}{n-j} \parens*{1-\frac{j(j-1)}{(n-1)n}}}\frac{n(n-1)}{n(n-1)-t(t+1)}\\
        = \frac{(n-1)n\parens*{\frac{1}{2} - \frac{t(2n+t-1)}{2(n-1)n} }}{(n-1)n - t(t+1)},
    \end{align}
    where the second equality is due to the fact that
    \begin{equation*}
        \sum\limits_{j=1}^t\frac{1}{n-j} \parens*{1-\frac{j(j-1)}{(n-1)n}} = \frac{1}{n(n-1)}\sum\limits_{j=1}^t(n+j-1) = \frac{t(2n+t-1)}{2n(n-1)}. 
    \end{equation*}
    
    Recall that feasibility dictates $0\leq z\leq \frac{1}{n-t-1}$. If $n>1$,  we have $z\geq 0$  when $0\leq t\leq  \frac{1}{2} \parens*{\sqrt{8 n^2 - 8 n + 1} - 2 n + 1}$ and $z\leq \frac{1}{n-t-1}$ for $\frac{1}{2} \parens*{\sqrt{8 n^2 - 8 n + 1} - 2 n - 1}\leq t < n - 1$. Taking $t$ to be an integer satisfying $\frac{1}{2} \parens*{\sqrt{8 n^2 - 8 n + 1} - 2 n - 1} \leq t \leq \frac{1}{2} \parens*{\sqrt{8 n^2 - 8 n + 1} - 2 n + 1}$, we have $t = (\sqrt{2}-1+o(1))(n-1)$. It follows that the total weight of the grid is $\parens*{\sqrt{2} +o(1)}(n-1)$.

    We are not quite done, as there is one final condition to verify --- to ensure that all our weights are non-negative, we must have $0\leq \alpha_i\leq \frac{1}{2}$ for all $1\leq i\leq n-1$. From~\eqref{eq:alpharecursion} we have:
    \begin{align}
        \alpha_i &= \frac{i-1}{i+1}\parens*{\alpha_{i-1}+\frac{1}{n-i}} & \text{if } 2\leq i\leq t.\notag \\
        \alpha_{i} &=\frac{i-1}{i+1}(\alpha_{i-1}+z)\leq \frac{i-1}{i+1}\parens*{\alpha_{i-1}+\frac{1}{n-i}} & \text{if } i=t+1\notag\\
        \alpha_i &= \frac{i-1}{i+1}\alpha_{i-1} <\alpha_{i-1} & \text{if } t+2\leq i\leq n-1\notag
    \end{align}

    Thus, it suffices to show that $\alpha_i\leq \frac{1}{2}$ for all $2\leq i\leq t+1$. We will do so by showing that for $1\leq i\leq t+1$, we have $\alpha_i \leq \frac{i-1}{2(n-i-1)}$ by induction on $i$. We know that $\alpha_1=0$, so the base case is clear. Let $i>1$ and assume the induction hypothesis; then
    \begin{equation*}
        \alpha_i \leq \frac{i-1}{i+1}\parens*{\alpha_{i-1}+\frac{1}{n-i}} \leq \frac{i-1}{i+1}\parens*{\frac{i-2}{2(n-i)} +\frac{1}{n-i}} \le \frac{i-1}{2(n-i-1)}.
    \end{equation*}
    We have $\frac{i-1}{2(n-i-1)}\leq\frac{1}{2}$ whenever $i\leq n-i$, which is true since $i\leq t+1 = (\sqrt{2}-1+o(1))(n-1)$.

    Hence our weighting is indeed feasible and $w\parens*{\Gamma_n \setminus \set*{(0,0)}} = \parens*{\sqrt{2} + o(1)} (n-1)$. It thus follows that any $k$-cover of $\Gamma_n$ using only horizontal, vertical, and diagonal lines must have size at least $\parens*{\sqrt{2} + o(1)} k(n-1)$.
    \end{proof}
    
\section{Conclusion} \label{sec:conclusion}

In this paper, we studied line coverings with multiplicities for two-dimensional real grids. We determined the minimum size of a cover in several cases, but some natural and interesting questions remain open, and we highlight them below. 

\medskip
In Section~\ref{sec:ballserra}, we investigated for which grids the Ball--Serra bound is tight. We proved that, when $n$ is sufficiently large with respect to $m$ and $k$, the Ball--Serra bound is tight for any $n\times m$ grid. Moreover, we showed that the threshold value for $n$ given by~\cref{thm:asymmetric2d} is tight for \emph{most} grids. It can be shown, however, that this bound on $n$ is not best possible for \emph{all} grids. For example, for the grid $\Gamma(S_1,S_2)$, where $S_1 = \set{0,1,2,\dots, n-1}$ and $S_2 = \set{-1,0,1}$, and any $k\geq 3$, we can show that the Ball--Serra bound is tight already for $n = 2(k-1)$, as opposed to the lower bound $n \ge 2k-1$ of~\cref{thm:asymmetric2d}. However, in this grid the omitted point $(0, 0)$ is not a corner point, while, as in the square grid setting, it is more natural to consider grids in which $(0,0)$ is a corner. For such grids, one could investigate when the Ball--Serra bound holds.

\begin{ques}
    Let $\Gamma$ be the grid $\set{0,1,2,\dots, n-1}\times \set{0,1,2,\dots, m-1}$ and $k\geq 2$ be an integer. How large must $n$ be with respect to $m$ and $k$ to have $cov(\Gamma) = k(n-1) + (m-1)$?
\end{ques}

Our main result for standard grids establishes reasonably good asymptotic lower and upper bounds on $\cov(\Gamma_n)$. It would be of interest to close the remaining gap. 

\begin{ques}
    What is the true asymptotic value of $\cov(\Gamma_n)$?
\end{ques}

We tend to believe that $\cov(\Gamma_n) = (\sqrt{2}+o(1))k(n-1)$. In~\cref{thm:lowerboundthreetypes}, we showed this to be the case when we only use lines of slope $0$, $\infty$, and $-1$. However, for the weighting we used to establish the lower bound, one can show that lines of slope $1$ near the origin (e.g., $y = x+1$) have weight larger than $1$ when $n$ is large. We believe that these are the only problematic lines, and so as an intermediate step one could attempt to verify that our weighting remains feasible if one only forbids lines of slope $1$. This would imply that any $k$-cover of $\Gamma_n$ of size smaller than $\parens*{\sqrt{2} + o(1)} k(n-1)$ must contain many lines of slope $1$. To show that such a construction is unlikely to exist, it might be helpful to consider what happens if we restrict ourselves to lines of slope $0$, $\infty$, $-1$, and $1$.

\medskip
In our work thus far we observed that the standard grid $\Gamma_n$ requires many fewer lines to cover than any other $n\times n$ grid we considered. Our general lower bound from~\cref{thm:squaregridsgeneral}\ref{thm:squaregridslb} (and the improvement for grids in which $(0,0)$ is a corner discussed after the proof) is not strong enough to establish this fact, and we propose the following problem.

\begin{ques}
    Is it true that $\cov(\Gamma_n) \leq \cov(\Gamma)$ for any $n\times n$ grid $\Gamma$ in which $(0,0)$ is a corner?
\end{ques}
More broadly, it would be of interest to improve the lower bound from~\cref{thm:squaregridsgeneral}\ref{thm:squaregridslb}, which we do not believe to be best possible. 

\medskip
Another direction that might lead to interesting findings is to consider translates of the standard grid in which the omitted point is not in the lower-left corner. How does the position of the origin then affect the value of $\cov(\Gamma)$? For instance, what are the asymptotics of $\cov(\Gamma)$, where $\Gamma =\Gamma(\set{-\floor{n/2},\dots, \ceil{n/2}},\set{-\floor{n/2},\dots, \ceil{n/2}})$?

\medskip
While we mainly focused on two-dimensional real grids, it would be natural to investigate the problem in higher dimensions as well. For example, in Section~\ref{sec:ballserra}, we remarked that the Ball--Serra bound can be tight for higher-dimensional grids as well, provided that one of the sides is much longer than the others. How much longer does that side need to be for the bound to be attained? Some first results in this direction were shown in the bachelor's thesis of the fourth author~\cite{bakker_2021_thesis}. Once again, it would be particularly interesting to investigate $\cov\parens{\Gamma_n^{(d)}}$ for the standard $d$-dimensional grid $\Gamma_n^{(d)} = \set{0,\dots, n-1}^d$.

\medskip
Finally, while all of our results are stated for grids over $\mathbb{R}$, the questions we considered and our general framework extend to grids over any field. Some of our results, for example Theorem~\ref{thm:asymmetric2d}, extend to arbitrary fields. 
It will be interesting to prove similar results for other fields, and in particular for fields of positive characteristic.

\end{document}